\title{Graded Clifford algebras of prime global dimension with an action of $H_p$}
\author{De Laet, Kevin}
\date{}
\theoremstyle{plain}
\newtheorem{theorem}{Theorem}[section]
\newtheorem{lemma}[theorem]{Lemma}
\newtheorem{proposition}[theorem]{Proposition}
\newtheorem{remark}[theorem]{Remark}
\newtheorem{example}[theorem]{Example}
\newtheorem{definition}[theorem]{Definition}
\DeclareMathOperator{\Ext}{Ext}
\numberwithin{equation}{section}
\begin{document}
\maketitle
\begin{abstract}
\noindent In this article we study graded Clifford algebras with a gradation preserving action of automorphisms given by $H_p$, the Heisenberg group of order $p^3$ with $p$ prime. After reviewing results in dimensions $3$ and $4$, we will determine the graded Clifford algebras that are $AS$-regular algebras of global dimension 5 and generalise certain results to arbitrary dimension $p$.
\end{abstract}
\section{Introduction}
In \cite{artin1987graded}, Artin and Schelter classified the Artin-Schelter regular algebras of global dimension 3, that is, graded associative algebras over $\mathbb{C}$ with excellent homological properties. The algebras of type A with 3 generators in this classification are the Sklyanin algebras, with defining relations
\begin{align*}
axy+byx+cz^2=0,\\
ayz+bzy+cx^2=0,\\
azx+bxz+cy^2=0,
\end{align*}
with some restrictions on $(a:b:c) \in \mathbb{P}^2$. In \cite{artin2007some}, Artin, Tate and Van den Bergh showed that these Sklyanin algebras depend on an elliptic curve $E$ and a point $\tau \in E$. It was noticed by Smith and Tate in \cite{smith1994center} that the Heisenberg group $H_3$ of order 27 acts on such an algebra as gradation preserving automorphisms. In \cite{odesskii1989sklyanin}, Odesskii and Feigin generalised the Sklyanin algebras to every dimension $n\geq 3$ and defined the $n$-dimensional Sklyanin algebras $A_n(\tau,E)$, where for each $n$ $H_n$ again works as algebra automorphisms on $A_n(\tau,E)$. In those cases, $A_n(\tau,E)_1 \cong V = \mathbb{C}x_0 + \ldots +\mathbb{C}x_n$ as an $H_n$-representation, with the action of $H_n$ given by
\begin{displaymath}
e_1 \cdot x_i = x_{i-1}, e_2 \cdot x_i= \omega^i x_i, 
\end{displaymath}
$\omega$ being a primitive $n$th root of unity and the indices taken $\bmod n$. Since the Heisenberg group of order $n^3$ plays such an important role in the study of these Sklyanin algebras, other examples of graded Artin-Schelter regular algebras with $n$ generators and on which $H_n$ acts as gradation preserving automorphisms would be desired and in the best case, a complete classification. While this classification has been made for $n=3$, a classification for $n \geq 4$ is not yet found.
\par Another interesting class of algebras is given by graded Clifford algebras, i.e. algebras depending on a quadratic form over a polynomial ring in an indeterminate number of variables, with entries of degree 2. Such algebras are always finite over their center and therefore they have a rich representation theory.
\par This paper looks at algebras that belong to these two worlds: it will discuss graded algebras $\mathfrak{C}(a,b),(a,b) \in \mathbb{A}^2$ ($\mathfrak{C}(A:B:C), (A:B:C) \in \mathbb{P}^2$), with generators in degree 1 and relations of the form
\begin{align*}
x_1x_4+x_4x_1 = ax_0^2,&& x_2x_3+x_3x_2 = bx_0^2,\\
x_2x_0+x_0x_2 = ax_1^2,&& x_3x_4+x_4x_3 = bx_1^2,\\
x_3x_1+x_1x_3 = ax_2^2,&& x_4x_0+x_0x_4 = bx_2^2,\\
x_4x_2+x_2x_4 = ax_3^2,&& x_0x_1+x_1x_0 = bx_3^2,\\
x_0x_3+x_3x_0 = ax_4^2,&& x_1x_2+x_2x_1 = bx_4^2.
\end{align*}
We will call these $H_5$-Clifford algebras (although these algebras aren't always Clifford algebras). Generically, a $H_5$-Clifford algebra $\mathfrak{C}(a,b)$ will be a graded Clifford algebra generated in degree 1, with associated quadratic form
\begin{equation}
\begin{bmatrix}
2x_0^2  & bx_3^2  & a x_1^2 & a x_4^2 & b x_2^2 \\
b x_3^2 & 2x_1^2  & b x_4^2 & a x_2^2 & a x_0^2 \\
a x_1^2 & b x_4^2 & 2 x_2^2 & b x_0^2 & a x_3^2 \\
a x_4^2 & a x_2^2 & b x_0^2 & 2 x_3^2 & b x_1^2 \\
b x_2^2 & a x_0^2 & a x_3^2 & b x_1^2 & 2 x_4^2
\end{bmatrix}
\end{equation}
on the polynomial ring $\mathbb{C}[x_0^2,x_1^2,x_2^2,x_3^2,x_4^2]$. $\mathfrak{C}(a,b)_1$ will be isomorphic as an $H_5$-representation to $V_1$, where $V_1$ is the unique simple representation such that for $z=[e_1,e_2], \varphi(z) = \omega I_5$ with $\omega = e^{\frac{2\pi i}{5}}$ and $\varphi:H_5 \rightarrow \mathbf{M}_5(\mathbb{C})$ the morphism determined by $V_1$. Crucial to describing these algebras will be the Koszul dual $\mathfrak{C}(a,b)^!$, which will be a commutative algebra with 5 generators and 5 homogeneous relations and therefore it will define a projective variety in $\mathbb{P}^4$.
\par Sometimes we will make the 10 relations we are interested in homogeneous, i.e. we will look at the algebras with equations
\begin{align*}
C(x_1x_4+x_4x_1) = Ax_0^2, &&C(x_2x_3+x_3x_2) = Bx_0^2,\\
C(x_2x_0+x_0x_2) = Ax_1^2, &&C(x_3x_4+x_4x_3) = Bx_1^2,\\
C(x_3x_1+x_1x_3) = Ax_2^2, &&C(x_4x_0+x_0x_4) = Bx_2^2,\\
C(x_4x_2+x_2x_4) = Ax_3^2, &&C(x_0x_1+x_1x_0) = Bx_3^2,\\
C(x_0x_3+x_3x_0) = Ax_4^2, &&C(x_1x_2+x_2x_1) = Bx_4^2.
\end{align*}
and the following relations, which come naturally if $C=1$, but must be included when $C = 0$.
\begin{align*}
B(x_1x_4+x_4x_1) = A(x_2x_3+x_3x_2),\\
B(x_2x_0+x_0x_2) = A(x_3x_4+x_4x_3),\\
B(x_3x_1+x_1x_3) = A(x_4x_0+x_0x_4),\\
B(x_4x_2+x_2x_4) = A(x_0x_1+x_1x_0),\\
B(x_0x_3+x_3x_0) = A(x_1x_2+x_2x_1).
\end{align*}
Considering this, if we put $C = 0$, we still get an algebra with 10 quadratic relations, but we cannot hope to find Artin-Schelter regular algebras of global dimension 5 this way, since in these cases the algebras will not be of finite global dimension.
\par In the last section we will generalise certain phenomena in dimension 5 to $H_p$-Clifford algebras, $p$ prime. The main result regarding quantum spaces will be
\begin{theorem}
There are exactly $p+1$-points (corresponding to the points in $\mathbb{P}^1_{\mathbb{F}_p}$) in $\mathbb{P}^{\frac{p-1}{2}}$ for which the corresponding $H_p$-Clifford algebra will be isomorphic to the quantum space $\mathbb{C}_{-1}[x_0,\ldots,x_{p-1}]$.
\end{theorem}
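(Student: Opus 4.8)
The plan is to realize $\mathbb{C}_{-1}[x_0,\ldots,x_{p-1}]$ as one distinguished point of $\mathbb{P}^{\frac{p-1}{2}}$ and to obtain the remaining $p$ isomorphic algebras as its orbit under the normalizer of $H_p$. First I would observe that the quantum space is itself an $H_p$-Clifford algebra: its relations $x_ix_j+x_jx_i=0$ for $i\neq j$, together with the central squares $x_k^2$, correspond to the diagonal quadratic form, hence to the point $P_0=(0:\cdots:0:1)\in\mathbb{P}^{\frac{p-1}{2}}$ at which every off-diagonal parameter vanishes. That $\mathbb{C}_{-1}[x_0,\ldots,x_{p-1}]$ is indeed a graded Clifford algebra of global dimension $p$ is clear, since each $x_k^2$ is central and the algebra is a cocycle twist of $\mathbb{C}[x_0,\ldots,x_{p-1}]$, hence Koszul and AS-regular.

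Next I would bring in the Clifford (Weil) group. Since $V_1$ is the unique irreducible $H_p$-module on which $z=[e_1,e_2]$ acts by $\omega I_p$, Schur's lemma shows that the normalizer of $\varphi(H_p)$ in $GL(V_1)$ surjects, modulo scalars and $\varphi(H_p)$ itself, onto $\mathrm{Sp}(\mathbb{F}_p^2)=\mathrm{SL}_2(\mathbb{F}_p)$, acting on $V_1$ through the Weil representation. Any such operator conjugates the space of $H_p$-equivariant quadratic relations to itself, so it carries one $H_p$-Clifford algebra to another and therefore induces an action of $\mathrm{SL}_2(\mathbb{F}_p)$ on $\mathbb{P}^{\frac{p-1}{2}}$ by graded algebra isomorphisms. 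Moreover $-I$ acts on $V_1$ as the inversion $x_i\mapsto x_{-i}$, which permutes the ten relations without altering any parameter, so the action descends to $\mathrm{PSL}_2(\mathbb{F}_p)$.

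I would then compute the stabilizer of $P_0$. The diagonal torus and the upper unipotent of $\mathrm{SL}_2(\mathbb{F}_p)$ act on $V_1$ by monomial matrices — index rescaling $x_i\mapsto x_{\lambda i}$, the phase $e_2$, and the quadratic phase furnished by $\begin{pmatrix}1&1\\0&1\end{pmatrix}$ — and every monomial transformation sends the diagonal Clifford form to itself, so the entire Borel subgroup $B$ fixes $P_0$. Since the Fourier operator $\begin{pmatrix}0&-1\\1&0\end{pmatrix}$ visibly moves $P_0$ off the diagonal locus, the stabilizer is proper, and by maximality of $B$ it equals $B$ exactly. As $|\mathrm{SL}_2(\mathbb{F}_p):B|=p+1$, the orbit of $P_0$ consists of exactly $p+1$ distinct points, canonically identified with $\mathrm{SL}_2(\mathbb{F}_p)/B=\mathbb{P}^1_{\mathbb{F}_p}$ — equivalently, with the $p+1$ Lagrangian lines in the symplectic space $\mathbb{F}_p^2$ underlying $H_p$, each line yielding a polarization that exhibits the corresponding algebra as a quantum space. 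Every point of this orbit gives an algebra isomorphic to $\mathbb{C}_{-1}[x_0,\ldots,x_{p-1}]$, the isomorphism being the Weil operator itself.

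It remains to show that no further point of $\mathbb{P}^{\frac{p-1}{2}}$ produces the quantum space, and this rigidity is where I expect the real work. Given a graded isomorphism $\phi:\mathfrak{C}(P)\xrightarrow{\sim}\mathbb{C}_{-1}[x_0,\ldots,x_{p-1}]$, its degree-one restriction is a linear isomorphism determining $\phi$, since both algebras are generated in degree one; transporting the fixed $H_p$-action along $\phi$ equips the quantum space with a second copy of $V_1$, and the uniqueness of the irreducible $H_p$-module with central character $\omega$ forces the two actions to be conjugate by an element $g$ of the normalizer. Then $g\phi$ is an $H_p$-equivariant isomorphism, so $P$ lies in the $\mathrm{SL}_2(\mathbb{F}_p)$-orbit of $P_0$, and the orbit is precisely the quantum-space locus. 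The main obstacle is making this comparison of $H_p$-structures airtight — checking that it really yields a genuine normalizer element rather than merely a projective one, and confirming that the stabilizer is no larger than $B$ — together with writing the Weil action on the parameters explicitly enough to recognize the promised copy of $\mathbb{P}^1_{\mathbb{F}_p}$.
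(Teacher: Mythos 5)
The orbit-counting half of your argument is sound, and in one respect sharper than the paper's: the paper only proves the $PSL_2(p)$-orbit of the quantum-space point has \emph{at least} $p+1$ elements (by checking that $U$ moves the point and that $VU$ fixes it while moving its $U$-image), whereas your Borel-stabilizer argument pins the orbit down to exactly $[PSL_2(p):B]=p+1$, granted Dickson's theorem that $B$ is maximal and the explicit check (done in the paper) that the Fourier element moves the point. The genuine gap is in your rigidity paragraph, and it is precisely the obstacle you flagged but did not overcome. Uniqueness of the irreducible $H_p$-module with central character $\omega$ produces an intertwiner $g\in GL(V_1)$ with $g\varphi'(h)g^{-1}=\varphi_A(h)$, where $\varphi'$ is the action transported through $\phi$ and $\varphi_A$ is the standard one; but such a $g$ conjugates the \emph{transported} copy of the Heisenberg group onto the standard copy, while membership in the normalizer is the different condition $g\varphi_A(H_p)g^{-1}\subseteq \varphi_A(H_p)\mathbb{C}^*$, which nothing forces. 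Likewise ``$g\phi$ is an $H_p$-equivariant isomorphism'' onto the quantum space is unjustified: $g\phi$ is an algebra map into $\mathbb{C}_{-1}[x_0,\ldots,x_{p-1}]$ only if $g^{\otimes 2}$ preserves the quantum relation space, i.e.\ only if $g$ is a graded automorphism of the quantum space, and Schur's lemma gives no such thing. Worse, the claim refutes itself: since every algebra in the family carries the \emph{same} standard matrices $\varphi$ in degree one, a genuinely $H_p$-equivariant isomorphism $\mathfrak{C}(P)\to\mathbb{C}_{-1}[x_0,\ldots,x_{p-1}]$ would have scalar degree-one part by Schur, forcing $R_P=R_{P_0}$, i.e.\ $P=P_0$ exactly. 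Applied to any of the other $p$ points of your own orbit (which are isomorphic to the quantum space but distinct from $P_0$), this is a contradiction; so for those points no untwisted equivariant isomorphism exists, and the step cannot be repaired by mere tightening.

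What is actually needed is \emph{twisted} equivariance, i.e.\ a normalizer element, and to produce one you must use the structure of the graded automorphism group of the target: a linear map preserves the span of $\{x_ix_j+x_jx_i,\ i\neq j\}$ if and only if it is monomial (permutation composed with diagonal), so the transported image $\varphi'(H_p)$ lies in the monomial group. The missing lemma is then: any subgroup of the monomial group that is an irreducible image of $H_p$ with central character $\omega$ is conjugate, \emph{within the monomial group}, into $\varphi_A(H_p)\mathbb{C}^*$. (Sketch: irreducibility forces the image in $S_p$ to be a transitive abelian group, hence generated by a $p$-cycle; after a permutation conjugation this is the standard cycle; the commutator of the cycle generator with a generator of the diagonal part is a primitive $p$th root of unity times the identity, which forces the diagonal part to be $\langle\mathrm{diag}(\omega^i)\rangle$ up to scalars; finally a diagonal conjugation normalizes the cycle's coefficients, using that every complex number has a $p$th root.) With this lemma, $m\phi_1$ is a genuine normalizer element carrying $R_P$ to $R_{P_0}$ for a suitable monomial $m$, and your conclusion follows. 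Note that this lemma is exactly the work the paper routes around differently: it observes that the rank-one locus of the Clifford quadratic form in $\mathbb{P}^{p-1}$ is an isomorphism invariant, that for the quantum space it is a single $H_p$-orbit of $p$ points, that by its fixed-point lemma there are only $p+1$ such orbits, and that prescribing any one of them as rank-one locus determines the parameters --- giving the upper bound of $p+1$ that your rigidity step was meant to supply.
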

Considering the quantum spaces, we also have the following duality
\begin{theorem}
There is a 1-to-1 correspondence between the $PSL_2(p)$-orbit of the point $(1:0:\ldots:0)$ and the $PSL_2$-orbit of the line $a_0 = 0$. This correspondence is a morphism of $PSL_2(p)$-sets and its action coincides with the canonical action of $PSL_2(p)$ on $\mathbb{P}^1_{\mathbb{F}_p}$.
\end{theorem}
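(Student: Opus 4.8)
The plan is to realise both sets in the statement as the \emph{same} transitive $PSL_2(p)$-set, by exhibiting a single Borel subgroup that fixes both the point $(1:0:\cdots:0)$ and the hyperplane $\{a_0=0\}$. The group action I would use is the one the parameter space inherits from the normaliser of $H_p$: since $SL_2(\mathbb{F}_p)$ acts on $H_p/Z\cong\mathbb{F}_p^2$ by symplectic automorphisms, the Weil (metaplectic) representation gives a projective action on $\mathbb{P}(V_1)$, and on the $\frac{p+1}{2}$-dimensional even (symmetric) constituent carrying the parameters the element $-I$ acts by a scalar, hence trivially on $\mathbb{P}^{\frac{p-1}{2}}$, so the action descends to $PSL_2(p)$. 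I would fix the standard generators $S=\bigl(\begin{smallmatrix}0&-1\\1&0\end{smallmatrix}\bigr)$, $T=\bigl(\begin{smallmatrix}1&1\\0&1\end{smallmatrix}\bigr)$ and the diagonal torus $h(t)=\mathrm{diag}(t,t^{-1})$, and record their action in the basis $a_0,\ldots,a_{\frac{p-1}{2}}$ coming from the even functions $f_0$ and $f_i+f_{-i}$ on $\mathbb{F}_p$.

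The key computation, and the step I expect to be the main obstacle, is the explicit description of this action on the even constituent. Here $T$ acts diagonally (by powers of $\omega$, via $a_i\mapsto\omega^{ci^2}a_i$), while $h(t)$ acts by $a_i\mapsto\chi(t)\,a_{t^{-1}i}$ (indices mod $p$, up to sign), so that $h(t)$ merely permutes and rescales the coordinates and in particular fixes the line $a_0$; finally $S$ acts by a normalised finite Fourier transform. Getting these formulas right, and in particular verifying that the torus fixes the coordinate $a_0$ while $S$ does not, is the crux; everything else is formal. Granting them, the Borel subgroup $B=\langle T,\ h(t):t\in\mathbb{F}_p^\times\rangle$ (of order $\tfrac{p(p-1)}{2}$) fixes both the point $(1:0:\cdots:0)$ and the hyperplane $\{a_0=0\}$, since each of $T$ and $h(t)$ scales $a_0$ and stabilises the coordinate subspace spanned by the remaining $a_i$. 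By the preceding theorem the point $(1:0:\cdots:0)$ is the quantum space $\mathbb{C}_{-1}[x_0,\ldots,x_{p-1}]$.

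Since $PSL_2(p)$ is simple with $B$ maximal, and neither $(1:0:\cdots:0)$ nor $\{a_0=0\}$ is $PSL_2(p)$-fixed (the Fourier transform $S$ moves both), the stabilisers of the point and of the hyperplane are forced to equal $B$ exactly. Orbit–stabiliser then gives both orbits size $[PSL_2(p):B]=p+1$, matching the count in the preceding theorem. Because the two stabilisers coincide, the assignment $g\cdot(1:0:\cdots:0)\mapsto g\cdot\{a_0=0\}$ is a well-defined bijection, and it is tautologically a morphism of $PSL_2(p)$-sets. Under the standard projective duality $(c_0:\cdots:c_n)\leftrightarrow\{\sum_i c_ia_i=0\}$ this is exactly the pairing $(1:0:\cdots:0)\leftrightarrow\{a_0=0\}$, which provides a conceptual reading of the correspondence.

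Finally, to show the correspondence \emph{is} the canonical action on $\mathbb{P}^1_{\mathbb{F}_p}$, I would use that a transitive $PSL_2(p)$-action on $p+1$ points with Borel point-stabiliser is unique up to isomorphism, namely the action on cosets $PSL_2(p)/B$, which is precisely the M\"obius action on $\mathbb{P}^1_{\mathbb{F}_p}$. Since this action is primitive and $B$ is self-normalising, $\mathbb{P}^1_{\mathbb{F}_p}$ admits no nontrivial $PSL_2(p)$-set automorphism, so both orbits are \emph{canonically} identified with $\mathbb{P}^1_{\mathbb{F}_p}$ and our bijection is the identity on it, as claimed.
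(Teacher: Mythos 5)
Your superstructure is sound, but as a proof the proposal has a genuine gap, and it sits exactly where you flagged it: the explicit formulas for how $T$, $h(t)$ and $S$ act on $(a_0:\cdots:a_{\frac{p-1}{2}})$ are \emph{granted} rather than proved, and those formulas are not a technical preliminary --- they are essentially the entire content of the paper's proof. The paper proves precisely one of them: twisting the $H_p$-structure by the unipotent $\bigl(\begin{smallmatrix}1&0\\1&1\end{smallmatrix}\bigr)$ (i.e.\ replacing $e_1$ by $e_1e_2$, keeping $e_2$) forces the new generators to be $y_i=\omega^{-i(i+1)/2}x_i$, and the induced map on parameters is then diagonal, $a_i\mapsto\omega^{-i^2/4}a_i$; since $i^2\equiv j^2\bmod p$ implies $i\equiv\pm j\bmod p$, the $\frac{p+1}{2}$ eigenvalues are pairwise distinct, so the fixed locus of this Sylow $p$-subgroup in $\mathbb{P}^{\frac{p-1}{2}}$ is exactly the set of coordinate points. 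Until a computation of this kind is carried out you cannot even assert that your Borel $B$ fixes $(1:0:\cdots:0)$, so nothing downstream of it is available. Moreover your route needs strictly more than the paper's: you also need the torus formula (twisting by $\mathrm{diag}(t,t^{-1})$ gives $y_i=x_{ti}$, hence $a_i\mapsto a_{ti}$ with indices folded into $\{0,\ldots,\frac{p-1}{2}\}$ via $a_{-j}=a_j$, so $a_0$ is fixed and the remaining coordinates are permuted), which the paper never computes because its correspondence uses only Sylow $p$-subgroups. Your predicted formulas are in fact correct, so the approach would not fail; but in a blind proof they are the theorem, not a hypothesis.

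Granting those formulas, the rest of your argument is correct and genuinely different from the paper's. The paper associates to each point of the orbit the \emph{unique} Sylow $p$-subgroup fixing it (unique because the point stabilizer, of order $\frac{p(p-1)}{2}$, has a normal Sylow $p$-subgroup) and takes the hyperplane spanned by that subgroup's remaining fixed points; equivariance then comes from conjugacy of Sylow subgroups. You instead identify the stabilizers of the point and of the hyperplane with the same Borel $B$ via maximality of $B$, which makes the bijection and its $PSL_2(p)$-equivariance formal, and your final step --- a transitive $PSL_2(p)$-set with self-normalizing point stabilizer $B$ is canonically $PSL_2(p)/B\cong\mathbb{P}^1_{\mathbb{F}_p}$, admitting no nontrivial $G$-set automorphisms --- is a cleaner justification of the clause ``coincides with the canonical action'' than anything the paper makes explicit. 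Two minor caveats: simplicity of $PSL_2(p)$ is never needed (maximality of $B$ plus the fact that $S$ moves both objects suffices), and the appeal to projective duality is only heuristic, since duality is equivariant for the contragredient action rather than the given one, so it cannot substitute for the stabilizer argument.
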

The main result regarding the regular $H_p$-Clifford algebras will be
\begin{theorem}
The character series of a regular $H_p$-Clifford algebra is the same as the character series of the polynomial ring $\mathbb{C}[x_0,\ldots,x_{p-1}]$, with the degree 1 part of the $H_p$-Clifford algebra isomorphic to the degree 1 part of $\mathbb{C}[x_0,\ldots,x_{p-1}]$ as an $H_p$-representation.
\end{theorem}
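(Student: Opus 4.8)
The plan is to use the graded Clifford structure of a regular $H_p$-Clifford algebra $A=\mathfrak{C}(a,b)$. As a graded Clifford algebra, $A$ is free as a module over its central polynomial subring $Z=\mathbb{C}[x_0^2,\dots,x_{p-1}^2]$, with the $2^p$ square-free monomials $x_{i_1}\cdots x_{i_k}$ forming a homogeneous $Z$-basis, and reducing modulo the $x_i^2$ collapses every defining relation to $x_ix_j+x_jx_i=0$ (for $i\neq j$) or $x_i^2=0$, so $A/(x_0^2,\dots,x_{p-1}^2)\cong\Lambda(V)$ as graded algebras. This is all $H_p$-equivariant: the ideal $(x_0^2,\dots,x_{p-1}^2)$ is $H_p$-stable, hence the reduction $A\twoheadrightarrow\Lambda(V)$ is a morphism of graded $H_p$-algebras, and picking an $H_p$-equivariant graded $\mathbb{C}$-linear section of it and multiplying by $Z$ produces an isomorphism $A\cong Z\otimes_{\mathbb{C}}\Lambda(V)$ of graded $H_p$-representations (graded Nakayama guarantees it is an isomorphism). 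Consequently the character series factorises, $\chi_A(t)=\chi_Z(t)\,\chi_{\Lambda(V)}(t)$ in $R(H_p)[[t]]$, where $\chi_M(t)=\sum_d[M_d]t^d$ records the classes $[M_d]$ in the representation ring.

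First I would pin down the two factors as $H_p$-representations. By construction of the $H_p$-action on $\mathfrak{C}(a,b)$, $A_1\cong V=V_1$, the standard $p$-dimensional irreducible on which $z$ acts as $\omega I$, which is exactly $\mathbb{C}[x_0,\dots,x_{p-1}]_1=\mathrm{Sym}^1V$; this already settles the second assertion. Thus $\chi_{\Lambda(V)}(t)=\sum_{k=0}^p[\Lambda^kV]t^k$. The ring $Z$ is generated in degree $2$ by $U:=\mathbb{C}x_0^2+\dots+\mathbb{C}x_{p-1}^2$, and the formulas $e_1\cdot x_i^2=x_{i-1}^2$, $e_2\cdot x_i^2=\omega^{2i}x_i^2$, $z\cdot x_i^2=\omega^2x_i^2$ identify $U$ with the $p$-dimensional irreducible $V_2$ on which $z$ acts as $\omega^2I$ (equivalently $[V_2]=\psi^2[V]$, the second Adams operation). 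Hence $\chi_Z(t)=\sum_{m\geq0}[\mathrm{Sym}^mV_2]t^{2m}$, and, identifying $\mathbb{C}[x_0,\dots,x_{p-1}]=\mathrm{Sym}(V)$, the first assertion becomes the purely representation-theoretic identity
\[\sum_{n\geq0}[\mathrm{Sym}^nV]\,t^n=\Big(\sum_{m\geq0}[\mathrm{Sym}^mV_2]\,t^{2m}\Big)\Big(\sum_{k=0}^p[\Lambda^kV]\,t^k\Big)\quad\text{in }R(H_p)[[t]].\]

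The heart of the argument is a conjugacy lemma: for every $g\in H_p$, the operator $g$ on $V_2$ and the operator $g^2$ on $V$ have the same characteristic polynomial. For a central $g=z^c$ both are the scalar $\omega^{2c}I$. For a non-central $g$, its centraliser $\langle g,z\rangle$ has order $p^2$ and index $p$, and $V$ is induced from a character $\chi$ of $\langle g,z\rangle$ with $\chi(z)=\omega$; the $p$ conjugate characters occurring in $V|_{\langle g,z\rangle}$ differ from $\chi$ only by the $p$ distinct powers of $z$ on $g$, so the eigenvalues of $g$ on $V$ are $\{\chi(g)\eta:\eta\in\mu_p\}$, and since $g^p=1$ (the Heisenberg group has exponent $p$ for $p$ odd) one has $\chi(g)\in\mu_p$, so these eigenvalues are precisely the $p$-th roots of unity, each simple; the identical computation applies to $V_2$. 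Hence for non-central $g$ both operators have characteristic polynomial $X^p-1$ — using for $g^2$ on $V$ that squaring permutes $\mu_p$, $p$ being odd. Taking traces in all symmetric and exterior powers, the lemma gives $\det(1-t^2g|_{V_2})=\det(1-t^2g^2|_V)=\det(1-tg|_V)\det(1+tg|_V)$, so that evaluating the three character series at $g$ yields
\[\frac{\det(1+tg|_V)}{\det(1-t^2g|_{V_2})}=\frac{1}{\det(1-tg|_V)},\]
i.e.\ $\chi_Z(t)\chi_{\Lambda(V)}(t)$ and $\chi_{\mathrm{Sym}(V)}(t)$ agree after evaluation at every $g$. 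Since a virtual character is determined by its values, the displayed identity follows, and with it the theorem.

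I expect the main obstacle to be the equivariant Clifford input of the first paragraph: verifying that a regular $H_p$-Clifford algebra really is a graded Clifford algebra, that it is free over $Z$ with quotient $\Lambda(V)$, and that the module splitting can be taken $H_p$-equivariant; once that is in hand the remaining content is the short conjugacy lemma. If one would rather not invoke the Clifford structure directly, an equivalent route runs through the Koszul dual: since all defining relations lie in $\mathrm{Sym}^2V$ we get $\Lambda^2V^*\subseteq R^\perp$, so $\mathfrak{C}(a,b)^!$ is commutative, equal to $\mathrm{Sym}(V^*)/(W)$ with $W=R^\perp\cap\mathrm{Sym}^2V^*\cong V_2^*$ of dimension $p$; AS-regularity forces $\mathfrak{C}(a,b)^!$ to be Frobenius with Hilbert series $(1+t)^p$, hence a complete intersection cut out by a regular sequence of $p$ quadrics spanning $W$, whose Koszul complex gives $\chi_{\mathfrak{C}(a,b)^!}(t)=\chi_{\mathrm{Sym}(V^*)}(t)\sum_k(-1)^k[\Lambda^kW]t^{2k}$; feeding this into the equivariant Koszul-duality relation $\chi_A(t)^{-1}=\sum_n(-1)^n[(A^!_n)^{*}]t^n$ and applying the same conjugacy lemma again returns $\chi_A(t)=\chi_{\mathrm{Sym}(V)}(t)$.
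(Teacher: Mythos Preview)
Your proof is correct and follows essentially the same route as the paper: both use that a regular $H_p$-Clifford algebra is free of rank $2^p$ over the central polynomial ring $Z=\mathbb{C}[x_0^2,\dots,x_{p-1}^2]\cong\mathrm{Sym}(V_2)$ with cofactor isomorphic to $\Lambda(V)$ as a graded $H_p$-module, and then verify $\chi_Z(t)\,\chi_{\Lambda(V)}(t)=\chi_{\mathrm{Sym}(V)}(t)$ by evaluating at each conjugacy class. The only difference is packaging---the paper writes out the explicit rational functions $\frac{(1+\omega^kt)^p}{(1-\omega^{2k}t^2)^p}$ and $\frac{1+t^p}{1-t^{2p}}$ and simplifies, whereas you encode the same computation as the determinant identity $\det(1-t^2g|_{V_2})=\det(1-tg|_V)\det(1+tg|_V)$ via your conjugacy lemma.
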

This means that, as an $H_p$-module, a $H_p$-Clifford algebra cannot be distinguished from the polynomial ring in $p$ variables.
\subsection*{Notations}
Throughout the paper, we will use the following notations and conventions
\begin{itemize}
\item $\mathbb{C}_{-1}[y_1,\ldots,y_k]$ is the noncommutative algebra with defining relations $y_i y_j = -y_j y_i, 1 \leq i < j \leq k$.
\item For a subset $S \subset \mathbb{C}[y_1,\ldots,y_k]$, $\mathbf{V}(S)$ is the Zariski-closed subset defined by the elements of $S$. It will be clear from the context whether we look at subsets of $\mathbb{A}^k$ or $\mathbb{P}^{k-1}$.
\item $\mu_n$ is the set of $n$th roots of unity in $\,\mathbb{C}$.
\item If $G$ is a finite group and $V$ is a $G$-representation, then the associated character of $G$ is denoted by $\chi_V$.
\end{itemize}
\section*{Acknowledgements}
The author would like to thank M. Van den Bergh, who proposed to look at these algebras and who has been a tremendous help along the way and L. le Bruyn, who gave ideas on what to do with these algebras and who also gave tips on how to write a readable paper. The author would also like to thank T. Raedschelders for suggesting to include a section with preliminaries.
\section{Preliminaries}
Those readers that are familiar with Koszul algebras, graded Clifford algebras, the representation theory of the Heisenberg group $H_p$ with $p$ prime and/or the connection between modular curves $X(p)$ and $H_p$ may skip this section. The last part of the subsection Koszul algebras however will be unfamiliar to most readers.
\subsection{Koszul algebras}
\begin{definition}
Given a quadratic algebra $A = T(V)/I$ with generators $V = \mathbb{C}x_0+\ldots+\mathbb{C}x_n$ and relations given by $I_2$, we define the Koszul dual to be the quadratic algebra $T(V^*)/J$, with $J_2$ defined as the subspace of $V^* \otimes V^*$ such that $\forall w \in J_2, \forall v \in I_2: w(v) = 0$.
\end{definition}
We say that $A$ is Koszul iff $A^! \cong \Ext_A(\mathbb{C},\mathbb{C})$. The standard properties of Koszul algebras we will need are that there is a relation between the Hilbert series of $A$ and $A^!$, given by
\begin{displaymath}
H_A(t)H_{A^!}(-t) = 1
\end{displaymath}
and that $A$ is Koszul iff $A^!$ is Koszul.
\par An important fact concerning Koszul algebras is that the Koszul complex associated to these algebras is of the form
\begin{displaymath}
\xymatrix{ \ldots \ar[r] & A \otimes (A^!)_n^* \ar[r]^-{(d_K)_n} & A \otimes (A^!)_{n-1}^* \ar[r] & \ldots}
\end{displaymath}
and that 
\begin{displaymath}
(A^!)_n^* = V^{\otimes n-2} \otimes I_2 \cap \ldots \cap I_2 \otimes V^{\otimes n-2}.
\end{displaymath}
$(d_K)_n$ is given by taking the first component of $(A^!)_n^*$ and absorbing it in $A$, for example $(d_K)_1(a \otimes x) = ax \in A$. It follows from this description that each $(d_K)_n$ is a $G$-morphism, whenever $G$ acts on $A$ as gradation preserving algebra automorphisms. This is useful for finding the character series when $G$ is a finite (or more generally, reductive) group.
\begin{definition}
Let $G$ be a finite group. The character series for an element $g \in G$ and for a graded algebra $A$ on which $G$ acts as gradation preserving automorphisms is a formal sum 
\begin{displaymath}
Ch_A(g,t) = \sum_{n \in \mathbb{Z}} \chi_{A_n}(g) t^n.
\end{displaymath}
\end{definition}
For example, if $g =  1$, $Ch_A(1,t) = H_A(t)$, the Hilbert series of $A$. Since a character of a representation is constant on conjugacy classes, we can represent the decomposition of $A$ in simple $G$-representations as a vector of length equal to the number of conjugacy classes and in the $i$th place the character series $Ch_A(g,t)$ with $g \in C_i$, the $i$th conjugacy class.
\par Suppose now that $A$ is a Koszul algebra and that a finite group $G$ acts on it as gradation preserving automorphisms. Because the Koszul complex is a free resolution of the trivial module $\mathbb{C}$, which is isomorphic as $G$-representation to the trivial representation and because the Koszul complex consists of $G$-morphisms, we have a similar formula for finding the character series of the Koszul dual as we have for the Hilbert series. More precisely, we have
\begin{align}
Ch_A(g,t) Ch_{(A^!)^*}(g,-t) = 1.
\label{al:chKos}
\end{align}
This allows us to compute $Ch_{A^!}(g,t)$ whenever we know $Ch_A(g,t)$. To know the character series of $A^!$, we have to take the complex conjugates of the coefficients of $Ch_{(A^!)^*}(g,t)$.
\subsection{Graded Clifford algebras}
This subsection will deal with the case we are interested in, but this is not the general definition. For more information, see \cite{LeBruyn1994d} or \cite{vancliff1998some}.
\par Given our algebra $\mathfrak{C}(a,b)$, we can associate to it 5 quadratic equations in the following way: we can write our equations as $x_i x_j + x_j x_i = (M_k)_{ij}x_k^2$ with $M_k \in \mathbf{M}_5(\mathbb{C})$. Taking $\{z_0,\ldots,z_4\}$ as the basis of $(\mathfrak{C}(a,b)_1)^*$ such that $z_i(x_j)=\delta_{ij}$, we get 5 quadratic equations
\begin{displaymath}
q_k=[z_0,z_1,z_2,z_3,z_4] M_k \begin{bmatrix}
z_0 \\ z_1 \\ z_2\\z_3\\z_4
\end{bmatrix}, k = 0,\ldots ,4
\end{displaymath}
This way, we get a quadric system. $\cap_{i=0}^4\mathbf{V}(q_i)$ defines a Zariski closed set in $\mathbb{P}(\mathfrak{C}(a,b)_1)$ and a point in this closed set is called a base point. It is also clear that this set parametrizes the degree 1 elements of $\mathfrak{C}(a,b)$ whose square is 0, this follows from the definition of a Clifford algebra. The algebra $\mathbb{C}[z_0,z_1,z_2,z_3,z_4]/(q_0,q_1,q_2,q_3,q_4)$ is the Koszul dual $\mathfrak{C}(a,b)^!$ of $\mathfrak{C}(a,b)$. 
\par In our case, the matrix $M_0$ for example is given by
\begin{displaymath}
M_0 = \begin{bmatrix}
2&0&0&0&0\\
0&0&0&0&a\\
0&0&0&b&0\\
0&0&b&0&0\\
0&a&0&0&0
\end{bmatrix}
\end{displaymath}
and it follows that $q_0 = 2z_0^2+2az_1z_4+2bz_2z_3$. Since our relations are Heisenberg invariant, the other $q_k, k = 1,\ldots 4$ are easily found by cyclic permutation of the indices.
\par The next theorem will be crucial.
\begin{theorem}[\cite{cassidy2010generalizations}]
A graded Clifford algebra $\mathfrak{C}$ is quadratic, Auslander-regular of global dimension $n$, satisfies the Cohen-Macaulay property and has as Hilbert series $\frac{1}{(1-t)^n}$ if and only if the associated quadric system is base-point free. If this is the case, $\mathfrak{C}$ is also a noetherian domain and Artin-Schelter regular.
\label{th:Reg}
\end{theorem}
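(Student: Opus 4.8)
\noindent\emph{Proof strategy.} The plan is to transfer every property in the statement to the Koszul dual $\mathfrak{C}^{!}$, which as recalled above is the \emph{commutative} quadratic algebra $\mathbb{C}[z_0,\dots,z_{n-1}]/(q_0,\dots,q_{n-1})$, the $q_i$ being exactly the quadrics of the associated quadric system. The first, purely commutative-algebra, observation is that the following are equivalent: the quadric system is base-point free, i.e.\ $\mathbf{V}(q_0,\dots,q_{n-1})=\emptyset$ in $\mathbb{P}^{n-1}$; the algebra $\mathfrak{C}^{!}$ is finite dimensional over $\mathbb{C}$; and $q_0,\dots,q_{n-1}$ is a regular sequence in $\mathbb{C}[z_0,\dots,z_{n-1}]$. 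The first equivalence is the projective Nullstellensatz, and the second holds because $n$ elements of an $n$-dimensional Cohen--Macaulay (here polynomial) ring form a regular sequence precisely when the quotient has Krull dimension~$0$. In that case $\mathfrak{C}^{!}$ is a graded Artinian complete intersection generated by quadrics, so $H_{\mathfrak{C}^{!}}(t)=(1-t^2)^n/(1-t)^n=(1+t)^n$ and the socle of $\mathfrak{C}^{!}$ is one-dimensional, concentrated in degree~$n$.

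The second step feeds this into Koszul duality. A quadratic complete intersection is a Koszul algebra, and an Artinian Gorenstein (equivalently, Frobenius) Koszul algebra whose socle sits in top degree~$n$ is precisely the Koszul dual of a Koszul Artin--Schelter regular algebra of global dimension~$n$; the relation $H_{\mathfrak{C}}(t)\,H_{\mathfrak{C}^{!}}(-t)=1$ then gives $H_{\mathfrak{C}}(t)=1/(1-t)^n$. Hence, under base-point freeness, $\mathfrak{C}=(\mathfrak{C}^{!})^{!}$ is quadratic, Koszul, AS-regular of global dimension~$n$ with the asserted Hilbert series. For the converse one runs the same chain backwards: if $\mathfrak{C}$ is quadratic, AS-regular of global dimension~$n$ and $H_{\mathfrak{C}}(t)=1/(1-t)^n$, then (comparing Euler characteristics and using minimality of the resolution together with AS-regularity) the minimal free resolution of the trivial module is linear with Betti numbers $\binom{n}{i}$ in homological degree~$i$, so $\mathfrak{C}$ is Koszul, $\mathfrak{C}^{!}\cong\Ext_{\mathfrak{C}}(\mathbb{C},\mathbb{C})$ is Frobenius with Hilbert series $(1+t)^n$, and therefore $q_0,\dots,q_{n-1}$ is a regular sequence, i.e.\ the quadric system is base-point free.

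It remains to promote ``AS-regular'' to the full strength of the conclusion. The equality $H_{\mathfrak{C}}(t)=1/(1-t)^n=(1+t)^n(1-t^2)^{-n}$ forces the $2^n$ ordered square-free monomials in $x_0,\dots,x_{n-1}$ to form an $R$-basis of $\mathfrak{C}$, where $R=\mathbb{C}[x_0^2,\dots,x_{n-1}^2]$: they always span $\mathfrak{C}$ over $R$, and equality of Hilbert series forces $H_R(t)=(1-t^2)^{-n}$, so the $x_i^2$ are algebraically independent and central and $\mathfrak{C}$ is a genuine graded Clifford algebra over the central noetherian polynomial ring $R$, hence noetherian. Filtering $\mathfrak{C}$ by the number of factors taken from $\mathfrak{C}_1$ yields $\operatorname{gr}\mathfrak{C}\cong R\otimes_{\mathbb{C}}\Lambda^{\bullet}(\mathbb{C}^n)$, which is Auslander--Gorenstein and Cohen--Macaulay; these properties pass to $\mathfrak{C}$, and together with the finiteness of the global dimension obtained above they make $\mathfrak{C}$ Auslander-regular and Cohen--Macaulay. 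Finally, base-point freeness says that no nonzero element of $\mathfrak{C}_1$ squares to zero, and analysing $\mathfrak{C}$ as an order in the central simple algebra $\mathfrak{C}\otimes_R\operatorname{Frac}(R)$ (base-point freeness forcing $\sum_k x_k^2 M_k$ to be generically nondegenerate) shows that $\mathfrak{C}$ is prime, and in fact a domain.

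I expect the real obstacle to lie in this last step rather than in the Koszul-duality bookkeeping: Auslander-regularity and the Cohen--Macaulay property are strictly stronger than AS-regularity and cannot be read off from formal Koszul duality, so one must genuinely exploit the module-finiteness of $\mathfrak{C}$ over the central polynomial subring $R$ --- e.g.\ via the filtration above and the transfer theorems for filtered Auslander--Gorenstein rings --- and the domain statement is the most delicate point of all, since $\mathfrak{C}\otimes_R\operatorname{Frac}(R)$ is generically a matrix algebra, hence not a domain, so integrality of $\mathfrak{C}$ itself has to be established over the central ring rather than over its field of fractions.
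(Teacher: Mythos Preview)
The paper does not prove this theorem at all: it is stated with the attribution \cite{cassidy2010generalizations} and no argument is given. It is used purely as a black box (see the proof of Theorem~4.1 and the discussion in Section~5). Consequently there is no ``paper's own proof'' to compare your proposal against.

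That said, your outline is broadly in the spirit of the original sources (Le~Bruyn 1994, Cassidy--Vancliff 2010): the base-point-free condition is equivalent to $q_0,\dots,q_{n-1}$ being a regular sequence, hence $\mathfrak{C}^{!}$ is a Koszul complete intersection with Hilbert series $(1+t)^n$, and then Koszul duality / the PBW-type freeness of $\mathfrak{C}$ over $R=\mathbb{C}[x_0^2,\dots,x_{n-1}^2]$ gives the Hilbert series and the noetherian property. The points you yourself flag as delicate --- Auslander-regularity, Cohen--Macaulayness, and especially the domain property --- are indeed where the real work lies in the cited papers, and they are established there by structural arguments specific to Clifford algebras (e.g.\ analysing $\mathfrak{C}$ as a maximal order over $R$) rather than by abstract Koszul bookkeeping. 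Your filtration-and-transfer sketch for Auslander-regularity is plausible but would need a precise citation; your discussion of the domain property correctly identifies the difficulty without resolving it. If you want to include a proof rather than a citation, those are the steps that need to be made rigorous.
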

\begin{remark}
While we will mainly work with graded Clifford algebras, we will also study quadratic algebras that are not Clifford algebras. These algebras will have as a quotient a graded Clifford algebra, the extra relations being implied by the fact that every square of an element of degree 1 is central in a graded Clifford algebra.
\end{remark}
\subsection{The finite Heisenberg group(s)}
\begin{remark}
While a Heisenberg group of order $n^3$ can be defined for every $n \in \mathbb{N}$, the discussion here will only hold for $n=p$ prime. This is mainly to make things easier regarding the representation theory of these groups.
\end{remark}
\begin{definition}
The Heisenberg group of order $p^3$ is the finite group given by the generators and relations
\begin{displaymath}
H_p = \langle e_1,e_2,z | e_1^p = e_2^p=z^p,[e_1,e_2] = z, e_1z = z e_1,e_2z = z e_2\rangle 
\end{displaymath}
and it is a central extension of the group $\mathbb{F}_p \times \mathbb{F}_p$
\begin{displaymath}
 \xymatrixcolsep{4pc}\xymatrix{1 \ar[r]& \mathbb{F}_p \ar[r]^-{1 \mapsto z} & H_p \ar[r]^-{e_1 \mapsto (1,0)}_-{e_2 \mapsto (0,1)}&\mathbb{F}_p \times \mathbb{F}_p \ar[r]&  1}.
\end{displaymath}
\end{definition}
All the 1-dimensional simple representations of $H_p$ are induced by the characters of $\mathbb{F}_p \times \mathbb{F}_p$. The other simple representations are $p$-dimensional and are determined by a primitive $p$th root of unity. They are defined in the following way: choose a primitive $p$th root of unity $\omega$, then define the following action of $H_p$ on the vector space $V = \mathbb{C}x_0 + \ldots + \mathbb{C}x_{p-1}$
\begin{displaymath}
e_1 \cdot x_i = x_{i-1}, e_2 \cdot x_i= \omega^i x_i, 
\end{displaymath}
indices taken $\bmod p$. Taking another primitive root gives you another simple representation. This means that there are $p^2$ 1-dimensional and $p-1$ $p$-dimensional  irreducible representations, which are all the simple ones. There are $p^2+p-1$ conjugacy classes, 1 for each central element and the other $p^2-1$ classes contain a unique element of the form $e_1^a e_2^b$, $a,b \in \mathbb{F}_p, (a,b) \neq (0,0)$.
\par The character of a simple $p$-dimensional representation $V$ is given by 
\begin{align*}
\chi_V(z^k) &= p \omega^k \\
\chi_V(e_1^a e_2^b) &= 0, (a,b) \neq (0,0).
\end{align*}
Such a representation $V$ also defines an antisymmetric bilinear form on the $\mathbb{F}_p$-vector space $\mathbb{F}_p \times \mathbb{F}_p$. Identifying $e_1$ and $e_2$ with their images in $\mathbb{F}_p \times \mathbb{F}_p$, we get this form by setting $\langle e_1,e_2 \rangle = \omega$ and extending it linearly to $\mathbb{F}_p \times \mathbb{F}_p$, thus
\begin{displaymath}
\langle a e_1 + b e_2,c e_1 + d e_2 \rangle = \omega^{ad-bc}.
\end{displaymath}
If we define a group morphism $\langle z \rangle \stackrel{\phi}{\longrightarrow} \mu_p$ by $\phi(z) = \omega$ (written multiplicatively in $\mu_p$), then we have a commutative diagram
\begin{displaymath}
\xymatrix{H_p \times H_p \ar[r] \ar[d]^-{[,]} &\mathbb{F}_p \times \mathbb{F}_p \ar[d]^-{\langle , \rangle} \\
\langle z \rangle \ar[r]^-\phi & \mu_p }
\end{displaymath}
Since every $p$-dimensional representation is determined by the image of $z$, every nontrivial antisymmetric bilinear form on $\mathbb{F}_p \times \mathbb{F}_p$ uniquely defines a simple representation of $H_p$. Conversely, every simple $p$-dimensional representation $V$ of $H_p$ defines a unique nontrivial antisymmetric bilinear form on $\mathbb{F}_p \times \mathbb{F}_p$ by extending linearly $\langle e_1,e_2 \rangle = \frac{\chi_V(z)}{p}$.

\subsection{The modular curve $X(n)$}
\label{sub:Mod}
As is well known, the modular group $\Gamma = PSL_2(\mathbb{Z})$ acts on the complex upper half-plane
\begin{displaymath}
\mathbb{H}=\{x+iy | y>0\}
\end{displaymath}
by M\"obius transformations. The fundamental domain of this action defines isomorphism classes of elliptic curves and its compactification, made by adding the $\Gamma$-orbit $\overline{\mathbb{Q}}=\mathbb{Q}\cup \{\infty\}$, is the Riemann sphere $S^2$. In general, one can take any other group $G$ of finite index in $\Gamma$, find its fundamental domain in $\mathbb{H}$ and check what information a point in this domain holds. The modular curve $X(n), n \in\mathbb{N}$ is made this way by taking $G = \Gamma(n)$, with
\begin{displaymath}
\Gamma(n) = \left\lbrace \begin{bmatrix}
a & b \\ c & d
\end{bmatrix} \in \Gamma | a,d \equiv 1 \bmod n , b,c \equiv 0 \bmod n \right\rbrace.
\end{displaymath}
A point on $X(n)$ holds 3 pieces of information:
\begin{itemize}
\item an elliptic curve $(E,O)$,
\item an embedding of $\mathbb{Z}/n\mathbb{Z} \times \mathbb{Z}/n\mathbb{Z}$ into $E$ or equivalently, two generators $e_1,e_2$ of $E[n] = \{ P \in E| [n]P = O\}$.
\item a primitive $n$th root of unity $\omega$ such that $\langle e_1,e_2 \rangle = \omega$, where this bilinear antisymmetric form is found by the Weil-pairing.
\end{itemize}
$X(n)$ has an action of $PSL_2(n) = \Gamma/\Gamma(n)$ by definition. This action is defined by taking another set of generators of $E[n]$, $f_1,f_2$, but their inner product must still remain $\omega$. This defines an $SL_2(\mathbb{Z}/n\mathbb{Z})$-action, but since $-I_2$ works trivially on $X(n)$, we have an $PSL_2(\mathbb{Z}/n\mathbb{Z})$ action.
\par Now let $n=p$ be prime. As we have seen in last subsection, a bilinear antisymmetric form on $\mathbb{F}_p \times \mathbb{F}_p$ defines a simple $H_p$-representation $V$.
Let $P_1,P_2$ be a generating set of $E[p]$ and denote $P_{a,b} = [a]P_1+[b]P_2$, then there exists a (unique up to multiplication with a scalar) function $f$ on $E$ with divisor
\begin{displaymath}
-(P_{0,0} + \ldots + P_{0,p-1})+ P_{p-1,0}+\ldots + P_{p-1,p-1}.
\end{displaymath}
In \cite{silverman2009arithmetic} it is proved that there exists a primitive $p$th root of unity such that $\omega = \frac{f}{\phi_{P_2}^*(f)}$, where $\phi_{P}^*$ stands for the pullback under the morphism
\begin{displaymath}
E \stackrel{\phi}{\longrightarrow} E, \tau \mapsto \tau + P.
\end{displaymath}
Calculating the divisor, one finds that the function 
\begin{displaymath}
N(f)=f \phi_{P_1}^*(f)\ldots (\phi_{P_1}^*)^{p-1}(f)
\end{displaymath}
is constant and not 0, which means we can rescale $f$ so that $N(f) = 1$. We will now define an action of $H_p$ on the vector space
\begin{displaymath}
\mathcal{L}(P_{0,0} + \ldots + P_{0,p-1}) = H^0(E,\mathcal{O}(P_{0,0} + \ldots + P_{0,p-1})).
\end{displaymath}
Let $x_0 = 1$ and define
\begin{align*}
e_1 \cdot g = f\phi_{P_1}^*(g),\\ e_2 \cdot g= \phi_{P_2}^*(g).
\end{align*}
If we set $x_i =e_1^{p-i}\cdot x_0$, we find that 
\begin{align*}
e_1\cdot x_i = x_{i-1}, \\ e_2\cdot x_i = \omega^i x_i.
\end{align*}
This defines our action of $H_p$. These global sections define an embedding of $E$ into $\mathbb{P}^{p-1}$ and it is clear that the defining equations will be $H_p$-invariant.
\section{The case $n=3$}
Before we start with the case $n=5$, let us see what happens when $n = 3$. In this case, the classification of Artin-Schelter regular algebras has been done, see \citep{artin1987graded} and \citep{artin2007some}.
\par When $n=3$, the algebras we want to study have the following relations for $t \in \mathbb{C}$
\begin{align*}
xy+yx=t z^2, && yz+zy=t x^2, && zx+xz=t y^2.
\end{align*}
\begin{theorem}
For generic values of $t$, the $H_3$-Clifford algebra $\mathfrak{C}(t)$ is a Sklyanin algebra associated to the elliptic curve
\begin{displaymath}
E \leftrightarrow t(x^3+y^3+z^3)+(2-t^3)xyz, O = (1,-1,0)
\end{displaymath}
and translation by the point $(1:1:-t)$, which is a point of order 2. 
\end{theorem}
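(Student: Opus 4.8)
The plan is to recognise the defining relations of $\mathfrak{C}(t)$ as those of a Sklyanin algebra and then to read the associated geometric data $(E,\tau)$ off the point scheme. Writing $xy+yx-tz^{2}$, $yz+zy-tx^{2}$, $zx+xz-ty^{2}$ in the Sklyanin normal form $a\,x_ix_{i+1}+b\,x_{i+1}x_i+c\,x_{i+2}^{2}$ gives $(a:b:c)=(1:1:-t)$, so for all $t$ outside a finite set --- those for which $(1:1:-t)$ lands on the degeneracy locus of the Artin--Tate--Van den Bergh family, or equivalently (via Theorem~\ref{th:Reg}) those for which the associated quadric system fails to be base-point free --- the algebra $\mathfrak{C}(t)$ is Artin--Schelter regular of type $A$, hence of the form $A_{3}(E',\tau)$ with $E'$ its point scheme and $\tau\in E'$ the translation defining the point-scheme automorphism $\sigma$. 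It remains to compute $E'$ and $\tau$ and match them with the statement.

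First I would multilinearise the three relations: writing relation $r_{k}$ as $[u\ v\ w]\,N_{k}\,[u'\ v'\ w']^{\mathsf{T}}$ and assembling the linear forms into one $3\times 3$ matrix $M(u,v,w)$ whose $k$-th row encodes $r_{k}$ (so that for fixed $[p]$ the solutions $[p']$ of $r_{k}(p,p')=0$ are $\ker M(p)$), the point scheme is $\mathbf V(\det M)\subset\mathbb{P}^{2}$. A direct expansion gives, up to a nonzero scalar,
\begin{displaymath}
\det M(u,v,w)=t(u^{3}+v^{3}+w^{3})+(2-t^{3})uvw,
\end{displaymath}
which is exactly the cubic $E$ of the statement (alternatively one may quote the Artin--Tate--Van den Bergh formula $(a^{3}+b^{3}+c^{3})xyz=abc(x^{3}+y^{3}+z^{3})$ for the point scheme of the Sklyanin algebra with parameters $(a:b:c)$). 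The point $O=(1:-1:0)$ lies on $E$ and is one of the nine base points of the Hesse pencil, hence a flex of every smooth member, so it is a legitimate origin; and for generic $t$ the discriminant of $E$ is nonzero, so $E$ is smooth.

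Next I would identify $\sigma$. For $[p]\in E$ the vector $\sigma([p])$ spans $\ker M(p)$, which is the cross product of two rows of $M(p)$; carrying this out yields $\sigma([u:v:w])=[\,uv+tw^{2}:t^{2}uw-v^{2}:vw+tu^{2}\,]$, and in particular $\sigma(O)=(1:1:-t)$. To conclude that $\sigma$ is translation by $\tau=\sigma(O)=(1:1:-t)$ I would argue that its ``linear part'' is trivial: since $H_{3}$ acts on $\mathfrak{C}(t)$ by graded automorphisms it acts on $E$, through $H_{3}/Z=\mathbb{F}_{3}\times\mathbb{F}_{3}$, by translations by the $3$-torsion $E[3]$ (the matrices $e_{1}\cdot x_{i}=x_{i-1}$, $e_{2}\cdot x_{i}=\omega^{i}x_{i}$, with $\omega$ a primitive cube root of unity, are precisely the ones realising $E[3]$ on the Hesse pencil), and $\sigma$ commutes with this $H_{3}$-action because it is built from the $H_{3}$-equivariant relations. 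Writing $\sigma=\rho\circ(\,\cdot+\tau)$ with $\rho\in\operatorname{Aut}(E,O)$, commutation forces $\rho$ to fix the generators $e_{1},e_{2}$ of $E[3]$, hence all of $E[3]$; but for generic $t$ one has $\operatorname{Aut}(E,O)=\{\pm 1\}$ and $-1$ fixes nothing in $E[3]$ except $O$, so $\rho=\mathrm{id}$. (Alternatively one simply invokes that for type $A$ the point-scheme automorphism of a $3$-dimensional Sklyanin algebra is automatically a translation.) I expect this verification that $\sigma$ is genuinely a \emph{translation}, rather than the routine determinant and cross-product computations, to be the point requiring the most care; note also that the usual dualisation ambiguity in the definition of the point scheme at worst replaces $\tau$ by $-\tau$ or permutes the (fully symmetric) coordinates of $E$, so it does not affect the conclusion.

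Finally I would check that $\tau=(1:1:-t)$ has order $2$, i.e.\ $2\tau=O$. Restricting the cubic $F=t(u^{3}+v^{3}+w^{3})+(2-t^{3})uvw$ to the line $L$ through $O$ and $\tau$, parametrised by $(\lambda+\mu:\lambda-\mu:-t\lambda)$ (so $\mu=0$ gives $\tau$ and $\lambda=0$ gives $O$), a short computation gives $F|_{L}=t(8-t^{3})\,\lambda\mu^{2}$. Thus for $t\neq 0$ and $t^{3}\neq 8$ the line $L$ meets $E$ doubly at $\tau$ and simply at $O$; since $O$ is a flex, the group law gives $\tau+\tau+O=O$, i.e.\ $2\tau=O$. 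Collecting the finitely many excluded values of $t$ (degeneracy locus of the Sklyanin family, singular $E$, $t=0$, $t^{3}=8$, non-generic $\operatorname{Aut}(E,O)$) into the word ``generic'' completes the argument.
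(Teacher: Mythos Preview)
Your argument is correct and complete. The paper, however, does not actually supply a proof of this theorem: it is stated as a known consequence of the Artin--Schelter and Artin--Tate--Van den Bergh classification of regular algebras of global dimension $3$ (the references given at the start of the section), and the text moves on immediately to the enumeration of the seven exceptional values of $t$. So rather than differing from the paper's proof, you have filled in what the paper leaves to the literature.

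That said, what you have written is exactly the computation one would extract from \cite{artin2007some}: recognising $(a:b:c)=(1:1:-t)$, obtaining the Hesse cubic as $\det M$, and reading off $\tau=\sigma(O)=(a:b:c)$. Your self-contained verification that $\sigma$ is a translation via $H_{3}$-equivariance is a nice touch --- it avoids quoting the type-A structure theorem directly and ties in with the paper's emphasis on the Heisenberg action --- and your order-$2$ check by restricting the cubic to the secant line is clean. The only remark I would add is that, since the paper treats this theorem as background, your level of detail is more than is strictly required; a one-line appeal to the Artin--Tate--Van den Bergh description of the point scheme and its automorphism for type-A algebras with parameters $(1:1:-t)$ would also suffice.
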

There are however $7$ values for $t \in \mathbb{C}$ where the corresponding $H_3$-Clifford algebra is not a Sklyanin algebra: $t=2,2\omega,2\omega^2,-1,-\omega,-\omega^2,0$, with $\omega$ being a primitive third root of unity. The noncommutative algebra corresponding to $t=\infty$ is the algebra with relations
\begin{align*}
z^2=x^2=y^2=0
\end{align*}
and is clearly not regular, so we have a total of 8 points in $\mathbb{P}^1$ where we don't have a Sklyanin algebra. When $t = 0,2,2\omega,2\omega^2$, the corresponding algebra is still regular, but in the other $4$ cases the Koszul dual $\mathfrak{C}(t)^!$ does not define the empty set. For $t = -1,-\omega,-\omega^2, \infty$, $\mathfrak{C}(t)^!$ defines a set of three points
\begin{flalign*}
t=-1 \longleftrightarrow& \{(1:1:1),(1:\omega:\omega^2),(1:\omega^2:\omega)\}, \\
t=-\omega \longleftrightarrow& \{(1:1:\omega^2),(1:\omega:\omega),(1:\omega^2:1)\}, \\
t=-\omega^2 \longleftrightarrow &\{(1:1:\omega),(1:\omega:1),(1:\omega^2:\omega^2)\},\\
t=\infty \longleftrightarrow &\{(1:0:0),(0:1:0),(0:0:1)\}.
\end{flalign*}
Notice that we have an action of $PSL_2(3)$ on the moduli space $\mathbb{P}^1$ such that algebras in the same orbit are isomorphic, but the $H_3$-representation is twisted (but still as an $H_3$-module isomorphic to $V_1$): suppose that we have an algebra with relations
\begin{align*}
xy+yx=t z^2, && yz+zy=t x^2, && zx+xz=t y^2.
\end{align*}
Then we can twist the Heisenberg action by an automorphism $\phi$ of $H_3$ and take the eigenvector with eigenvalue $1$ of $\phi(e_2)$ and its orbit under $\phi(e_1)$ as generators of this algebra (just the same as when $\phi=Id$, where $x_0$ is an eigenvector with eigenvalue 1 of $e_2$ and $x_i = e_1^{-i} x_0$). We want the twist to preserve the character of the representation, which means that we need to preserve the antisymmetric inner product on $\mathbb{F}_3 \times \mathbb{F}_3$ defined by $\langle e_1,e_2 \rangle = \omega$ (written multiplicatively), where $e_1$ and $e_2$ are identified with their images in $\mathbb{F}_3\times \mathbb{F}_3$ like in the previous section.
%\begin{displaymath}
% \xymatrix{1 \ar[r]& \mathbb{F}_3 \ar[r] & H_3 \ar[r]&\mathbb{F}_3 \times \mathbb{F}_3 \ar[r]&  1}.
%\end{displaymath}
%\begin{remark}
%That the primitive root $\langle e_1,e_2 \rangle$ really determines the representation of $H_p$ (because this works for every prime $p$) follows from the following observation: the composition
%\begin{displaymath}
% \xymatrix{ H_p \times H_p \ar[r] & \mathbb{F}_p \times \mathbb{F}_p \ar[r]^-{\langle , \rangle}& \mu_p }
% \end{displaymath}
% with $\langle e_1,e_2 \rangle = \omega$ a fixed primitive root of unity is the same as the composition
% \begin{displaymath}
%  \xymatrix{ H_p \times H_p \ar[r]^-f & Z(H_p) \ar[r]^-g & \mu_p},
%\end{displaymath}
%with $f(a,b) = aba^{-1}b^{-1}$ and $g$ the morphism defined by $g(z) = \omega$. So we have that, for a simple representation $\varphi: H_p \rightarrow \mathbf{M}_p(\mathbb{C})$, $\varphi(z) = \omega I_p$ iff $\langle e_1,e_2 \rangle = \omega$.
%\end{remark}
\par The condition $\langle \phi(e_1), \phi(e_2) \rangle = \omega$ means that we need to look at elements of $SL_2(3)$. The element $-I_2$ however will work trivially on $\mathbb{P}^1$, so it will be a $PSL_2(3)$-action. Using the following generators
\begin{displaymath}
U=
\begin{bmatrix}
0 & -1 \\ 1 & 0
\end{bmatrix},
 V=
\begin{bmatrix}
0 & 1 \\ -1 & 1
\end{bmatrix},
\end{displaymath}
with $U^2 =V^3=I$, the action of $PSL_2(3)\cong A_4$ is given by the following M\"obius transformations:
 \begin{displaymath}
U\leftrightarrow U'(t) =\frac{-t+2}{t+1},
 V\leftrightarrow V'(t)=\frac{-\omega^2 t+2}{\omega^2 t+1}.
\end{displaymath}
This action is a \emph{right} action, for this action comes naturally by twisting the group morphism $\varphi: H_3 \rightarrow Aut(\mathfrak{C}(t))$ by an automorphism of $H_3$. So suppose that we have an automorphism $\psi:H_3 \rightarrow H_3$, then we get a new group morphism given by $\varphi \circ \psi: H_3 \rightarrow Aut(\mathfrak{C}(t))$.
\par Using this action, the sets $\{0,2,2\omega,2\omega^2\}$ and $\{\infty, -1,-\omega,-\omega^2\}$ correspond to the action of $PSL_2(3)$ on $\mathbb{P}^1_{\mathbb{F}_3}$.
\begin{example}
Suppose we want to calculate the action of 
\begin{displaymath}
M=VU=\begin{bmatrix}
1 & 0 \\ 1 & 1
\end{bmatrix}
\end{displaymath}
on our moduli space $\mathbb{P}^1$. Taking our algebra $\mathfrak{C}(t)$ (and we allow $t = \infty$), this means we need to find the eigenvector of $M \cdot e_2$ with eigenvalue $1$. In this particular case, this eigenvector is still $x_0$, but now $x_2$ is defined as $(e_1 e_2)x_0$ instead of $e_1 x_0$. So if we take $y_0,y_1,y_2$ as the generators of our isomorphic algebra, we have
\begin{align*}
y_0 &= x_0,\\
y_1 &= \omega^2 x_1, \\
y_2 &= x_2.
\end{align*}
A quick calculation shows that $y_0 y_1 + y_1 y_0 = \omega^2 t y^2_2$ and so the corresponding M\"obius transformation is given by $t \mapsto \omega^2 t$, with fixed points $0$ and $\infty$. This shows it is a right action: the composition $U'\circ V'$ is equal to the action of $M$, but $M = VU$.
\end{example}

In this case, the action of $PSL_2(3)$ is not a projectification of a 2-dimensional representation, because all $PSL_2(3)$-representations of dimension 2 would be direct sums of 1-dimensional representations. This is impossible because then every element of order 2 would act trivially and this is clearly not the case.
\par One other thing of importance is that we have a duality between the quantum spaces (the algebras isomorphic to the algebra $\mathfrak{C}(0)$) and the 4 nonregular algebras, given by the point varieties of the regular ones and the Koszul dual of the nonregular ones. Before we give this duality, we recall the definition of a point module.
\begin{definition}
Let $B$ be a positively graded, connected $\mathbb{C}$-algebra generated in degree 1. A point module $M$ is a cyclic, graded left $B$-module with Hilbert series $\frac{1}{1-t}$ with $M = B M_0$.
\end{definition}
\begin{example}
For the quantum space $\mathbb{C}_{-1}[x_0,\ldots,x_{p-1}]$, the point modules are described by the full graph on $p$ points, the $p$ vertices given by the $H_p$-orbit of the point $(1:0:\ldots:0)$.
\end{example}
\par The duality we want to show is given in the following way: let $\mathfrak{C}(t_1)$ be an algebra isomorphic to the quantum plane. Then its point variety (the point modules of this algebra) is given by the full graph on 3 points, the 3 points corresponding to a unique $H_3$-orbit. Then there exists a unique nonregular algebra $\mathfrak{C}(t_2)$ in our moduli space such that its Koszul dual $\mathfrak{C}(t_2)^!$ has as its point variety this unique $H_3$-orbit. For example, the point variety of the quantum plane $\mathfrak{C}(0)$
\begin{displaymath}
xy+yx=xz+zx=yz+zy=0
\end{displaymath}
is given by the three lines $x=0,y=0,z=0$, and they intersect 2 by 2 in 3 points. These points are the solutions of the equations $xy=xz=yz=0$. These equations are the relations of the Koszul dual of the algebra with relations
\begin{displaymath}
x^2=y^2=z^2=0,
\end{displaymath}
which is the algebra $\mathfrak{C}(\infty)$.
\begin{theorem}
There is a 1-to-1 correspondence between the $H_3$-Clifford algebras isomorphic to $\mathbb{C}_{-1}[x_0,x_1,x_2]$ and the nonregular $H_3$-Clifford algebras, with the correspondence being given in the following way: the regular $H_3$-Clifford algebra $\mathfrak{C}(t)$ corresponds to the nonregular $\mathfrak{C}(t')$ iff $t$ and $t'$ are fixed by the same subgroup of order 3 in $PSL_2(3)$. This correspondence gives a natural bijection between $\mathbb{P}^1_{\mathbb{F}_3}$ and the points in $\mathbb{P}^1$ with corresponding $H_3$-Clifford algebra isomorphic to the quantum plane.
\end{theorem}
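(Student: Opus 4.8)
The plan is to reduce the statement to the single matched pair $\mathfrak{C}(0)\leftrightarrow\mathfrak{C}(\infty)$ treated above and then to spread it over the two $PSL_2(3)$-orbits $R=\{0,2,2\omega,2\omega^2\}$ (the quantum planes) and $N=\{\infty,-1,-\omega,-\omega^2\}$ (the nonregular algebras) by an equivariance argument. We begin with a group-theoretic observation. Since $PSL_2(3)\cong A_4$, the subgroups of order $3$ of $PSL_2(3)$ are exactly its four Sylow $3$-subgroups; they are pairwise conjugate and each is self-normalising. Consequently every transitive $PSL_2(3)$-set of cardinality $4$ is isomorphic to $PSL_2(3)/P$ for a Sylow $3$-subgroup $P$, each such $P$ fixes a unique point of it (because $4=1+3$), and for any two such sets $X,Y$ the rule ``$x\leftrightarrow y$ iff $\operatorname{Stab}(x)=\operatorname{Stab}(y)$'' defines a $PSL_2(3)$-equivariant bijection, since $\operatorname{Stab}(gx)=g\operatorname{Stab}(x)g^{-1}$. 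We apply this to $R$, to $N$, to $\mathbb{P}^1_{\mathbb{F}_3}$ (the set of lines in $\mathbb{F}_3\times\mathbb{F}_3$ with its standard action), and to the set $\mathcal{O}$ of $H_3$-orbits of size $3$ in $\mathbb{P}^2$; there are exactly four of the latter, namely $\operatorname{Fix}(L)$ for a line $L\subset\mathbb{F}_3\times\mathbb{F}_3$---for instance $\operatorname{Fix}(\langle\bar e_2\rangle)=\{(1:0:0),(0:1:0),(0:0:1)\}$ and $\operatorname{Fix}(\langle\bar e_1\rangle)=\{(1:1:1),(1:\omega:\omega^2),(1:\omega^2:\omega)\}$---and $PSL_2(3)$ permutes them transitively via its action on the lines $L$. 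In particular the ``same order-$3$ subgroup'' relation between $R$ and $N$ is automatically a bijection, and $R\cong\mathbb{P}^1_{\mathbb{F}_3}$ as $PSL_2(3)$-sets; it remains to identify this abstract correspondence with the geometric one.

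To do so, attach to each $t\in R$ the element $L(t)\in\mathcal{O}$ consisting of the three pairwise intersection points of the point variety (the variety parametrising point modules) of $\mathfrak{C}(t)$: for $t=0$ this point variety is the union of the three coordinate lines, meeting in $\operatorname{Fix}(\langle\bar e_2\rangle)$, and for arbitrary $t\in R$ the algebra $\mathfrak{C}(t)$ is a twist of $\mathbb{C}_{-1}[x_0,x_1,x_2]$ on which $H_3$ acts by automorphisms permuting the point modules, so the triple of vertices is always a single size-$3$ $H_3$-orbit. Dually, attach to each $t'\in N$ the set $L'(t')\in\mathcal{O}$ equal to the base locus $\mathbf{V}(q_0,q_1,q_2)=\operatorname{Proj}\mathfrak{C}(t')^!\subset\mathbb{P}^2$, which is $H_3$-stable and, by the table of point varieties recalled above, equals one of the orbits $\operatorname{Fix}(L)$; in particular $L'(\infty)=\operatorname{Fix}(\langle\bar e_2\rangle)$, since $\mathfrak{C}(\infty)^!=\mathbb{C}[z_0,z_1,z_2]/(z_0z_1,z_0z_2,z_1z_2)$. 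Hence $L(0)=L'(\infty)$, and moreover $0$ and $\infty$ are fixed by the common order-$3$ subgroup $\langle t\mapsto\omega^2 t\rangle$ by the example with $M=VU$ above: this is the base case.

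It remains to propagate the base case by equivariance. A twist of the $H_3$-action by $\sigma\in PSL_2(3)$ is realised by an algebra isomorphism $\mathfrak{C}(t)\xrightarrow{\sim}\mathfrak{C}(\sigma t)$ which is nothing but a linear change of $H_3$-eigenbasis in degree $1$; since the point variety and the Koszul dual are intrinsic to the algebra, this isomorphism transports each of them, together with its $H_3$-orbit structure, along the induced coordinate change on $\mathbb{P}^2$. Therefore $L\colon R\to\mathcal{O}$ and $L'\colon N\to\mathcal{O}$ are $PSL_2(3)$-equivariant maps, and being equivariant maps from transitive $4$-element $G$-sets into the transitive $4$-element $G$-set $\mathcal{O}$, they are bijections. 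Combining with the group-theoretic observation, for every $\sigma\in PSL_2(3)$ we get $\operatorname{Stab}(\sigma\cdot 0)=\sigma\langle t\mapsto\omega^2 t\rangle\sigma^{-1}=\operatorname{Stab}(\sigma\cdot\infty)$ and $L(\sigma\cdot 0)=\sigma\cdot L(0)=\sigma\cdot L'(\infty)=L'(\sigma\cdot\infty)$; as $\sigma$ ranges over $PSL_2(3)$, the points $\sigma\cdot 0$ exhaust $R$ and $\sigma\cdot\infty$ exhaust $N$. Hence $\mathfrak{C}(t)$ $(t\in R)$ is matched with $\mathfrak{C}(t')$ $(t'\in N)$ by the point-module/Koszul-dual duality, i.e.\ $L(t)=L'(t')$, precisely when $\operatorname{Stab}(t)=\operatorname{Stab}(t')$---which is the assertion---and $R\xrightarrow{L}\mathcal{O}\cong\mathbb{P}^1_{\mathbb{F}_3}$ is the claimed natural bijection. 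The only step that genuinely uses the explicit geometry rather than the formalism of $A_4$-sets is the verification that $L'$ (equivalently $L$) takes four distinct values---i.e.\ that the four nonregular Koszul duals cut out four distinct triples of base points, which can be read off the table above; this is where the main work sits, everything else being bookkeeping with the $PSL_2(3)\cong A_4$ action.
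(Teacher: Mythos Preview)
Your argument is correct and, in substance, follows the same route as the paper: verify the single matched pair $\mathfrak{C}(0)\leftrightarrow\mathfrak{C}(\infty)$ and then use the $PSL_2(3)$-action to carry it over to the full orbits $R$ and $N$. The paper does not give a formal proof after the statement; the theorem is simply recorded at the end of the discussion in Section~3, which has already computed the four base loci explicitly (the table for $t=-1,-\omega,-\omega^2,\infty$), identified the two $PSL_2(3)$-orbits $\{0,2,2\omega,2\omega^2\}$ and $\{\infty,-1,-\omega,-\omega^2\}$, and exhibited the duality for the base pair via the point variety of $\mathfrak{C}(0)$ and the Koszul dual of $\mathfrak{C}(\infty)$.

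The difference in emphasis is worth noting. The paper verifies the correspondence pointwise, by listing all four triples of base points and checking by inspection that they are four distinct $H_3$-orbits; the equivariance of the assignment $t'\mapsto\mathbf{V}(\mathfrak{C}(t')^!)$ is implicit. You instead isolate the group theory: any two transitive $A_4$-sets of size $4$ are $A_4$-isomorphic via ``same Sylow $3$-stabiliser'', and the maps $L$, $L'$ are forced to be bijections once you know they are equivariant into the transitive set $\mathcal{O}$. This buys you a cleaner argument that scales (indeed it is exactly the template the paper reuses for $p=5$ and general $p$ in the later sections), at the cost of having to argue carefully that the twisting isomorphism $\mathfrak{C}(t)\xrightarrow{\sim}\mathfrak{C}(\sigma t)$ carries $H_3$-orbits in $\mathbb{P}^2$ to $H_3$-orbits in the way dictated by the $PSL_2(3)$-action on lines in $\mathbb{F}_3\times\mathbb{F}_3$. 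One small remark: once you have established that $L$ and $L'$ are $PSL_2(3)$-equivariant maps between transitive $4$-element $PSL_2(3)$-sets, bijectivity is automatic (the image of an equivariant map out of a transitive set is an orbit, hence all of $\mathcal{O}$), so your closing comment that ``the main work sits'' in checking four distinct values is not actually needed in your framework; it is rather the paper's substitute for the equivariance argument.
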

\begin{remark}
Although $4$ is not prime, one can ask the natural question what happens if $n = 4$. This case is however not very interesting: there aren't any quadratic algebras $\mathfrak{C}$ with defining relations $x_ix_j+x_jx_i=a_{ij}x_k^2$ and with degree 1 part isomorphic to $V_1$, $V_1$ being the $H_4$ representation with $z$ working as $iI_4$ (apart from the quantum space with $a=0$). There aren't any squares of elements in $V_1$ on which $e_2$ works as multiplying by $\pm i$, so this forces that $x_i x_j + x_j x_i = 0$ whenever $j-i \equiv 1 \bmod 2$. When $j-i \equiv 0 \bmod 2$, we need to define $x_0 x_2  +x_2 x_0$ and $x_1 x_3+x_3 x_1$, so we need to find elements $v$ and $w$ in degree 1 such that the action on $v^2$ and $w^2$ by $e_2$ is respectively given by multiplication with $-1$ and $1$, with the extra condition that $e_1$ permutes $v$ and $w$. Such elements are impossible to find (except when $u=v=0$), so we are done.
\end{remark}
By the last remark we have proved
\begin{theorem}
The only $H_4$-Clifford algebra is $\mathbb{C}_{-1}[x_0,x_1,x_2,x_3]$, which is regular.
\end{theorem}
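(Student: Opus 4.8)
The plan is to first determine all admissible defining relations from the Heisenberg action, and then to identify the resulting algebra and quote Theorem \ref{th:Reg} for the regularity claim. Write $V_1 = \mathbb{C}x_0 + \mathbb{C}x_1 + \mathbb{C}x_2 + \mathbb{C}x_3$ with $e_1 \cdot x_i = x_{i-1}$ and $e_2 \cdot x_i = \zeta^i x_i$, where $\zeta$ is a primitive fourth root of unity (so $z$ acts as $\zeta I_4$) and indices are read mod $4$. An $H_4$-Clifford algebra carries, for each of the six unordered pairs $\{i,j\}$, a relation $x_ix_j + x_jx_i = a_{ij}x_{k(i,j)}^2$, and the span $I_2 \subseteq V_1 \otimes V_1$ of these relations must be $H_4$-stable. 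Since $e_1$ permutes the pairs cyclically, the six pairs form two $e_1$-orbits: the four pairs with $j - i$ odd (the orbit $\{0,1\} \to \{3,0\} \to \{2,3\} \to \{1,2\}$) and the two pairs $\{0,2\},\{1,3\}$. Hence it suffices to analyse the relations attached to $\{0,1\}$ and to $\{0,2\}$.

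The key tool is the $e_2$-grading. The element $x_ix_j + x_jx_i$ is an $e_2$-eigenvector of eigenvalue $\zeta^{i+j}$, while $x_k^2$ has eigenvalue $\zeta^{2k} = (-1)^k \in \{\pm 1\}$. For the pair $\{0,1\}$ the left-hand side has eigenvalue $\zeta \notin \{\pm 1\}$, so $H_4$-equivariance forces $a_{01} = 0$; applying $e_1$ repeatedly, every relation in the odd orbit vanishes, that is, $x_ix_j + x_jx_i = 0$ whenever $j - i$ is odd. For the pair $\{0,2\}$ the left-hand side has eigenvalue $\zeta^2 = -1$, so if $a_{02} \neq 0$ then $k(0,2)$ must be odd; say $k(0,2) = 1$, the value $3$ being entirely analogous. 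Now apply $e_1^2$, which stabilises the set $\{0,2\}$ (interchanging $x_0$ and $x_2$) but carries $x_1^2$ to $x_3^2$: it sends the relation $x_0x_2 + x_2x_0 - a_{02}x_1^2$ to $x_0x_2 + x_2x_0 - a_{02}x_3^2$, both of which then lie in $I_2$. Subtracting, $a_{02}(x_3^2 - x_1^2) \in I_2$; but $I_2$ is spanned by the six anticommutator-type relations and therefore contains no nonzero combination of the pure squares $x_l^2$, so $a_{02} = 0$, and with it the $\{1,3\}$-relation (its $e_1$-image) vanishes as well. This is precisely the obstruction the Remark phrases as the impossibility of producing degree-one elements $v,w$ with the prescribed $e_2$-eigenvalues that $e_1$ interchanges.

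Thus all $a_{ij}$ vanish and the algebra is $\mathbb{C}\langle x_0, x_1, x_2, x_3\rangle / (x_ix_j + x_jx_i : i \neq j) = \mathbb{C}_{-1}[x_0,x_1,x_2,x_3]$; conversely the Heisenberg action visibly preserves the relations $x_ix_j + x_jx_i = 0$, so this is an $H_4$-Clifford algebra. For regularity I would apply Theorem \ref{th:Reg}: $\mathbb{C}_{-1}[x_0,\ldots,x_3]$ is a graded Clifford algebra whose matrices $M_k$ are diagonal with the single nonzero entry $2$ in position $(k,k)$, so its quadric system is $\{z_0^2, z_1^2, z_2^2, z_3^2\}$, whose common zero locus in $\mathbb{P}^3$ is empty. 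Being base-point free, the algebra is Auslander-regular and Artin-Schelter regular of global dimension $4$ with Hilbert series $(1-t)^{-4}$, hence in particular regular. I do not anticipate any genuine difficulty; the one point that wants care is the equivariance bookkeeping in the middle paragraph — above all the observation that $e_1^2$ fixes the pair $\{0,2\}$ while moving the only square that could sit on the right-hand side, which is exactly what rules the parameter out.
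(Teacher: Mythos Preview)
Your argument is correct and matches the paper's own reasoning in the Remark preceding the theorem: the $e_2$-eigenvalue mismatch disposes of the odd-parity pairs, and for the pair $\{0,2\}$ your $e_1^2$-argument is precisely the obstruction the paper phrases as ``$e_1$ must permute $v$ and $w$, which is impossible''. You are more explicit than the paper (and you supply the regularity check via Theorem~\ref{th:Reg}, which the paper leaves implicit), but the approach is the same.
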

\section{The case $n=5$}
From now on, $\omega = e^\frac{2\pi i}{5}$. Similar to the case of $n=3$, we have a right action of $PSL_2(5) \cong A_5$ on $\mathbb{P}^2$ that gives isomorphic algebras in the moduli space. This action is found using the exact same procedure as in the previous section. Again using $U,V$ as generators of $PSL_2(5)$, we get the following matrices
\begin{displaymath}
U\leftrightarrow
\frac{1}{\sqrt{5}}\begin{bmatrix}
\omega^2+\omega^3 & \omega+\omega^4 & 2 \\ \omega+\omega^4 & \omega^2+\omega^3 & 2
\\ 1&1&1
\end{bmatrix},
V\leftrightarrow
 \frac{1}{\sqrt{5}}\begin{bmatrix}
\omega+\omega^2 & \omega^2+1 & 2 \\ \omega^3+1 & \omega^3+\omega^4 & 2 \\ \omega^4 & \omega &1
\end{bmatrix}
\end{displaymath}
and now this projective representation comes from a simple representation of $A_5$, the icosahedron representation. The factor $\frac{1}{\sqrt{5}}$ is necessary to get an $A_5$ representation, but it doesn't matter for our algebras. To discuss these algebras, we will sometimes use the algebras which correspond to points at infinity, because their equations are easier to work with.
\subsection{The nonregular algebras}
The nicest algebras are those that are noetherian domains, so to find them we will use theorem \ref{th:Reg}. Using this theorem and the fact that the equations of the quadric system correspond to the relations of the Koszul dual, we need to analyse the following equations
\begin{subequations}
\begin{equation}
x_0^2+ax_1x_4+bx_2x_3=0,
\end{equation}
\begin{equation}
x_1^2+ax_2x_0+bx_3x_4=0,
\end{equation}
\begin{equation}
x_2^2+ax_3x_1+bx_4x_0=0,
\end{equation}
\begin{equation}
x_3^2+ax_4x_2+bx_0x_1=0,
\end{equation}
\begin{equation}
x_4^2+ax_0x_3+bx_1x_2=0
\end{equation}
\label{eq:koszul}
\end{subequations}
and determine when the only solution is given by $(0,0,0,0,0)$.
\par We can also look at the $H_5$-Clifford algebras corresponding to points on the line at infinity, so that we get a projective variety for every point in our moduli space $\mathbb{P}^2$. We do this by changing $(a,b)$ to $(A:B:C)$ and putting a $C$ before every $x_i^2$. However, since every point on the line at infinity of $\mathbb{P}^2$ is in the $PSL_2(5)$-orbit of an affine point, this is not always interesting. The points at infinity however will give equations that are easier to handle in some cases.
\begin{theorem}
Generically, the $H_5$-Clifford algebra $\mathfrak{C}(A:B:C)$ is a regular graded Clifford algebra.
\end{theorem}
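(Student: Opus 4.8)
The plan is to reduce the statement to Theorem~\ref{th:Reg}, which says that a graded Clifford algebra is quadratic, Auslander-regular of global dimension~$5$, Cohen--Macaulay, a noetherian domain and Artin--Schelter regular, with Hilbert series $(1-t)^{-5}$, precisely when its associated quadric system is base-point free. So the claim breaks into two parts: (i) for generic $(A:B:C)\in\mathbb{P}^2$ the algebra $\mathfrak{C}(A:B:C)$ is genuinely a graded Clifford algebra, and (ii) for generic $(A:B:C)$ the five quadrics
\begin{align*}
q_0 &= Cz_0^2+Az_1z_4+Bz_2z_3, & q_1 &= Cz_1^2+Az_2z_0+Bz_3z_4,\\
q_2 &= Cz_2^2+Az_3z_1+Bz_4z_0, & q_3 &= Cz_3^2+Az_4z_2+Bz_0z_1,\\
q_4 &= Cz_4^2+Az_0z_3+Bz_1z_2 &&
\end{align*}
have no common zero in $\mathbb{P}^4$.

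Part (i) is easy: as soon as $C\neq 0$ we may rescale to $C=1$ and recover the affine algebra $\mathfrak{C}(a,b)$ with $a=A/C$, $b=B/C$ of the introduction, which is by construction a graded Clifford algebra (the $z_i^2$ being central); the extra homogeneous relations listed after the defining equations are then automatic. Since $\{C\neq 0\}$ is a dense open subset of $\mathbb{P}^2$, the generic $\mathfrak{C}(A:B:C)$ is a graded Clifford algebra.

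For part (ii) I would argue by semicontinuity. The ``bad locus'' $Z=\{(A:B:C)\in\mathbb{P}^2 : \mathbf{V}(q_0,\dots,q_4)\neq\emptyset\text{ in }\mathbb{P}^4\}$ is Zariski closed: the incidence variety $\Sigma=\{((A:B:C),p)\in\mathbb{P}^2\times\mathbb{P}^4 : q_0(p)=\dots=q_4(p)=0\}$ is closed because the $q_i$ are bihomogeneous, and since $\mathbb{P}^4$ is complete the projection $\Sigma\to\mathbb{P}^2$ is a closed map, so its image $Z$ is closed. It therefore suffices to exhibit a single parameter outside $Z$, and I would take $(A:B:C)=(1:0:1)$, so that $q_i=z_i^2+z_{i+1}z_{i+4}$ with indices mod $5$. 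Multiplying the relations $z_i^2=-z_{i+1}z_{i+4}$ over all $i$ gives $(z_0z_1z_2z_3z_4)^2=-(z_0z_1z_2z_3z_4)^2$, hence $z_0z_1z_2z_3z_4=0$, so at any point of $\mathbf{V}(q_0,\dots,q_4)$ some coordinate vanishes. Since the $q_i$ are permuted cyclically by $H_5$ (the relations being $H_5$-invariant), the zero locus is $H_5$-stable, so after translating we may assume $z_0=0$; then $q_1,q_2,q_3,q_4$ force successively $z_1=z_2=z_3=z_4=0$. Thus the quadric system at $(1:0:1)$ is base-point free, so $Z\subsetneq\mathbb{P}^2$, and for every $(A:B:C)$ in the nonempty open set $(\mathbb{P}^2\setminus Z)\cap\{C\neq 0\}$, Theorem~\ref{th:Reg} yields the conclusion.

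The only mildly delicate point is choosing a test parameter that is simultaneously base-point free and lies in $\{C\neq 0\}$, so that one really has a graded Clifford algebra there; the value $(1:0:1)$ does both, by the short computation above. Everything else is standard: the closedness of $Z$ is elimination theory, and one could in fact describe $Z$ completely by eliminating $z_0,\dots,z_4$ from the system $q_0=\dots=q_4=0$ --- which is precisely what the next subsection does in order to pin down the non-regular $H_5$-Clifford algebras --- but for the present generic statement the one-point check is enough.
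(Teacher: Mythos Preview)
Your argument is correct. Both you and the paper reduce to Theorem~\ref{th:Reg}, so the task is to show the five quadrics have no common zero for generic parameters; the difference is in how this is established.

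The paper's proof is computational and explicit: a Gr\"obner basis calculation (done in Mathematica) shows that
\[
(1+a^5-4ab+a^6b+5a^3b^3+b^5+ab^6)\,x_4^6
\]
lies in the ideal generated by the $q_i$, so whenever that degree-$7$ polynomial in $(a,b)$ is nonzero one has $x_4^6\in I$, and by $H_5$-invariance every $x_i^6\in I$, forcing $\mathbf{V}(I)=\emptyset$. Your proof is instead a semicontinuity-plus-one-point argument: the bad locus is closed (projection from a bihomogeneous incidence variety over a complete base), and a direct hand computation at $(A:B:C)=(1:0:1)$ shows it is not all of $\mathbb{P}^2$. Your chosen point does lie outside the paper's discriminant curve (substituting $a=1$, $b=0$ gives the value $2$), so the two arguments are consistent.

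What each buys: your route is more elementary and self-contained---no computer algebra, just the product trick $\prod z_i^2=-\prod z_i^2$ and a short cascade---and it would generalise painlessly to the $H_p$ setting. The paper's route, on the other hand, produces an explicit equation for (a Zariski closed set containing) the bad locus, which is exactly what is needed immediately afterwards to factor it into six lines and a conic and to analyse the nonregular algebras; your argument proves the theorem but gives no handle on \emph{where} regularity fails.
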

\begin{proof}
Calculating the Gr\"obner basis of the relations of $\mathfrak{C}(a,b)^!$ using Mathematica, one finds that the following monomial belongs to the ideal $I$ defined by the equations from \ref{eq:koszul}
\begin{equation}
(1+a^5-4ab+a^6b+5a^3b^3+b^5+ab^6)x_4^6.
\end{equation}
This means that, if $1+a^5-4ab+a^6b+5a^3b^3+b^5+ab^6 \neq 0$, $x_4^6 \in I$. But $I$ is closed under the action of the Heisenberg group, so this implies that every $x_i^6$ belongs to $I$. This implies that $I$ defines the empty set and so $\mathfrak{C}(a,b)$ would have all the good properties we desire.
\end{proof}
\par So generically, we get a regular graded Clifford algebra. The only possible `bad' algebras are given by the curve $1+a^5-4ab+a^6b+5a^3b^3+b^5+ab^6 =0$. Adding the line $C=0$, decomposing the equation  $1+a^5-4ab+a^6b+5a^3b^3+b^5+ab^6 =0$ and looking at the projective closure of this variety, we get 6 lines and a conic section
\begin{subequations}
\begin{equation}
C = 0,
\end{equation}
\begin{equation}C+A+B=0,
\end{equation} 
\begin{equation}
C+\omega A + \omega^4 B=0,
\end{equation}
\begin{equation}
C+\omega^4A+\omega B=0,
\end{equation} 
\begin{equation}
C+\omega^2 A + \omega^3 B=0
\end{equation}
\begin{equation}
C+\omega^3 A + \omega^2 B=0,
\end{equation}
\begin{equation}
AB + C^2 = 0,
\end{equation}
\label{eq:6lines}
\end{subequations}
where the regularity condition possibly fails. Notice that the 6 lines we found form an orbit under $PSL_2(5)$ (as we expected). This means that, if we want to analyse how `far' these algebras are from being Artin-Schelter regular, we can restrict ourselves to studying the points on the line $C=0$ and on the conic section $AB+C^2$. For the line $C=0$, this boils down to describing the relations of the form
\begin{subequations}
\begin{equation}
Ax_1x_4+Bx_2x_3=0,
\end{equation}
\begin{equation}
Ax_2x_0+Bx_3x_4=0,
\end{equation}
\begin{equation}
Ax_3x_1+Bx_4x_0=0,
\end{equation}
\begin{equation}
Ax_4x_2+Bx_0x_1=0,
\end{equation}
\begin{equation}
Ax_0x_3+Bx_1x_2=0,
\end{equation}
\label{eq:lininf}
\end{subequations}
excluding the case $(A,B)=(0,0)$. For a generic point on the line $C=0$, one can use Macaulay2 and find that the Hilbert series is given by
\begin{displaymath}
\frac{1+4t+5t^2-5t^4}{1-t}.
\end{displaymath}
So it follows that generically, the $H_5$-orbit of $(1:0:0:0:0)$ is the solution set of the equations from \ref{eq:lininf}. However, when you calculate the Gr\"obner basis (which determines the Hilbert series) using Mathematica, there are 7 points on this line where this (possibly) fails:
\begin{align*}
(A:B:C)&=(1:-\omega^k:0), k = 0\ldots 4,\\
(A:B:C)&=(1:0:0),\\
(A:B:C)&=(0:1:0).
\end{align*} 
The Hilbert series indeed fails here. Calculating Hilbert series, we find
\begin{theorem}
For a point on the line $C = 0$, the algebra $\mathfrak{C}(A:B:C)^!$ determines the $H_5$-orbit of $(1:0:0:0:0)$ as Zariski-closed subset of $\mathbb{P}^4$, except in the following 7 cases:
\begin{itemize}
\item For $k=0,\ldots,4$, the point $(1:-\omega^k:0)$ is the intersection of the line $C=0$ and the line $C + \omega^{3k} A +\omega^{-3k} B = 0$. At the point $(1:-\omega^k:0)$, the projective variety determined by $\mathfrak{C}(1:-\omega^k:0)^!$ is given by 10 points. These 10 points form the union of 2 $H_5$-orbits, each orbit consisting of $5$ elements. Representatives of these 2 orbits are given by $(1:0:0:0:0)$ and $(1:1:\omega^{-2k}:\omega^{-k}:\omega^{-2k})$. The Hilbert series of the commutative algebra $\mathfrak{C}(A:B:C)^!$ is given by
\begin{displaymath}
\frac{1+4t+5t^2}{1-t}.
\end{displaymath} 
\item For $(A:B:C)=(1:0:0)$, we get 5 lines, $\cup_{i=0}^4 \mathbf{V}(x_i,x_{i+1},x_{i+2})$. The Hilbert series $\mathfrak{C}(1:0:0)^!$ is given by
\begin{displaymath}
\frac{1+3t+t^2}{(1-t)^2}.
\end{displaymath}
The corresponding configuration with its vertices the 5 points in the orbit of $(1:0:0:0:0)$ is given by figure \ref{fig:Configuration 1}.
\begin{center}
\begin{figure}[H]
\begin{tikzpicture}[style=thick]
\draw (18:3cm) circle (2pt)  node[above right=-1.75pt]{$(0:1:0:0:0)$} -- (90:3cm);
\draw (90:3cm) circle (2pt)  node[above]{$(1:0:0:0:0)$} -- (90+72:3cm);
\draw (90+72:3cm) circle (2pt)  node[above left=-1.75pt]{$(0:0:0:0:1)$} -- (90+72+72:3cm);
\draw (90+72+72:3cm) circle (2pt)  node[below]{$(0:0:0:1:0)$} -- (90+72+72+72:3cm);
\draw (90+72+72+72:3cm) circle (2pt)  node[below]{$(0:0:1:0:0)$} -- (90+72+72+72+72:3cm);
\end{tikzpicture}
\caption{First configuration}
\label{fig:Configuration 1}
\end{figure}
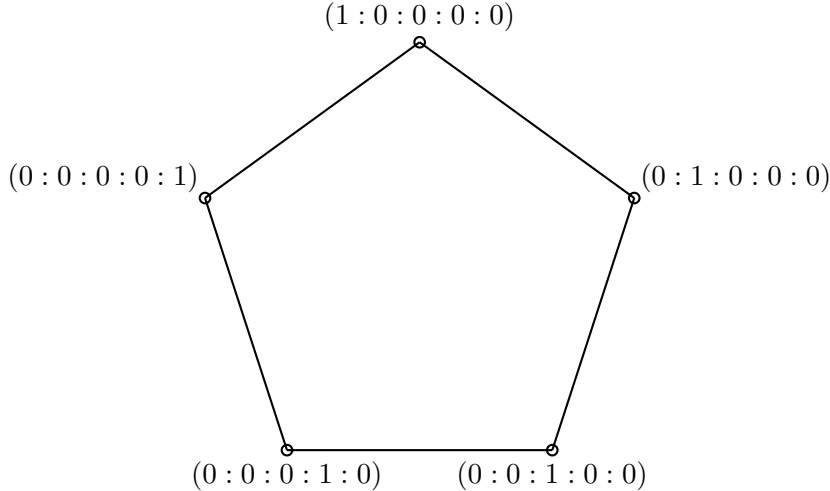
\end{center}
\item For $(A:B:C) = (0:1:0)$, there are again 5 lines, now given by $\cup_{i=0}^4 \mathbf{V}(x_i,x_{i+1},x_{i+3})$. The Hilbert series of $\mathfrak{C}(0:1:0)^!$ is given by
\begin{displaymath}
\frac{1+3t+t^2}{(1-t)^2}.
\end{displaymath}
The corresponding configuration is given by figure \ref{fig:Configuration 2}.
\begin{center}
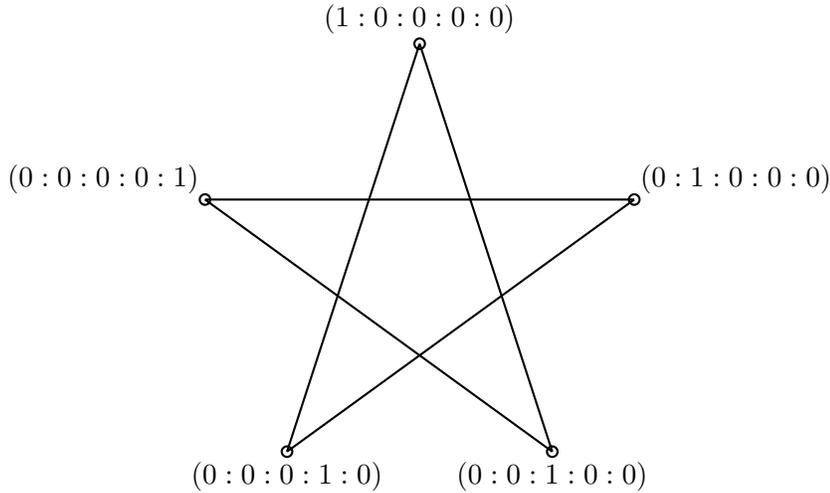
\begin{figure}[H]
\begin{tikzpicture}[style=thick]
\draw (18:3cm) circle (2pt)  node[above right=-1.75pt]{$(0:1:0:0:0)$} -- (90+72:3cm);
\draw (90+72:3cm) circle (2pt)  node[above left=-1.75pt]{$(0:0:0:0:1)$} -- (90+72+72+72:3cm);
\draw (90+72+72+72:3cm) circle (2pt)  node[below]{$(0:0:1:0:0)$} -- (90:3cm);
\draw (90:3cm) circle (2pt)  node[above]{$(1:0:0:0:0)$} -- (90+72+72:3cm);
\draw (90+72+72:3cm) circle (2pt)  node[below]{$(0:0:0:1:0)$} -- (18:3cm);
\end{tikzpicture}
\caption{Second configuration}
\label{fig:Configuration 2}
\end{figure}
\end{center}
\end{itemize}
\end{theorem}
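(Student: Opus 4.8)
The plan is to read off both the variety $\mathbf{V}(I)$ and the Hilbert series of the commutative algebra $\mathfrak{C}(A:B:0)^! = \mathbb{C}[x_0,\dots,x_4]/I$ directly from the relations \eqref{eq:lininf}, which I would write uniformly as $A x_{i+1}x_{i-1} + B x_{i+2}x_{i-2} = 0$, $i \in \mathbb{Z}/5\mathbb{Z}$, and treat three regimes in turn: the ``generic'' locus, the two monomial points $(1:0:0)$ and $(0:1:0)$, and the five points $(1:-\omega^k:0)$. First I would record two elementary facts about \eqref{eq:lininf}. (a)~Every monomial appearing in \eqref{eq:lininf} is divisible by two distinct coordinates, so the five coordinate points — the $H_5$-orbit of $(1:0:0:0:0)$ — always lie in $\mathbf{V}(I)$. (b)~Multiplying the five relations and simplifying the monomials on each side yields $(A^5+B^5)(x_0x_1x_2x_3x_4)^2 \in I$. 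Hence, as soon as $A^5+B^5 \neq 0$, every point of $\mathbf{V}(I)$ has a vanishing coordinate; applying the $H_5$-action one may assume it is $x_0$, and then — using $AB \neq 0$ to clear the scalar from four of the relations — those four collapse to the monomial equations $x_1x_2 = x_2x_4 = x_4x_3 = x_3x_1 = 0$ which, with the one surviving relation, force all but one of $x_1,\dots,x_4$ to vanish. This shows $\mathbf{V}(I)$ is exactly the orbit of $(1:0:0:0:0)$ whenever $AB(A^5+B^5)\neq 0$ on the line $C=0$; since $AB(A^5+B^5)=0$ cuts out on that line precisely $(1:0:0)$, $(0:1:0)$ and $(1:-\omega^k:0)$, $k=0,\dots,4$, this is also how one sees the exceptional list is complete. (The generic Hilbert series $\tfrac{1+4t+5t^2-5t^4}{1-t}$ is the one recorded just before the statement.)

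For the two monomial points I would argue combinatorially. At $(1:0:0)$ the ideal $(x_{i+1}x_{i-1})_i$ is the Stanley--Reisner ideal of the independence complex of the $5$-cycle on $\mathbb{Z}/5\mathbb{Z}$ whose edges are the distance-$2$ pairs; its facets are the five ``consecutive'' pairs $\{i,i+1\}$, giving $\mathbf{V}(I) = \bigcup_i \mathbf{V}(x_i,x_{i+1},x_{i+2})$ and, from the $f$-vector $(1,5,5)$, Hilbert series $\tfrac{1+3t+t^2}{(1-t)^2}$. The point $(0:1:0)$ is the same computation with the edge set now the adjacent pairs (the pentagon itself) and facets the distance-$2$ pairs, producing the five lines $\bigcup_i \mathbf{V}(x_i,x_{i+1},x_{i+3})$ and the identical Hilbert series.

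The remaining case $(1:-\omega^k:0)$ is where the real content lies, since there $A^5+B^5=0$ makes fact~(b) vacuous and solutions with all coordinates nonzero appear. Writing $\lambda=-\omega^k$ and, for such a solution, $r_i = x_i/x_{i-1}$, the relations \eqref{eq:lininf} become $r_i = -\lambda\, r_{i+3}$ for all $i$; since $3$ generates $\mathbb{Z}/5\mathbb{Z}$ this pins down every $r_i$ in terms of $r_0$, while the cyclic identity $r_0r_1r_2r_3r_4 = 1$ reduces (using $\lambda^5=-1$) to $r_0^5 = 1$. This produces exactly five such points; as $e_2$ multiplies $r_0$ by $\omega$ they form a single $H_5$-orbit, with representative $(1:1:\omega^{-2k}:\omega^{-k}:\omega^{-2k})$, so $\mathbf{V}(I)$ consists of these $10$ points. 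To get the Hilbert series $\tfrac{1+4t+5t^2}{1-t}$ I would either run the Gr\"obner-basis computation as elsewhere in the paper, or show that these $10$ points impose independent conditions on quadrics — equivalently that the five quadrics of \eqref{eq:lininf} already span $I_2$ — which forces $\dim(\mathbb{C}[x_0,\dots,x_4]/I)_d = 10$ for $d\geq 2$ and, incidentally, that $I$ is the radical ideal of the $10$ points.

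The one genuinely non-formal step is precisely this last Hilbert-series computation: the ideal at $(1:-\omega^k:0)$ is not monomial, and a configuration of $10$ points could a priori fail to impose independent conditions on the $15$-dimensional space of quadrics, so one must either defer to computer algebra or carry out the condition count by hand; the rest of the argument is soft. A worthwhile shortcut worth invoking is that the stabilizer of the line $C=0$ in $PSL_2(5)$ has order $10$, acts transitively on the five points $(1:-\omega^k:0)$ and interchanges $(1:0:0)$ with $(0:1:0)$, so — up to tracking the induced twist of the $H_5$-representation — it suffices to treat a single representative of each type.
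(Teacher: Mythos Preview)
Your argument is correct and considerably more informative than what the paper actually does. In the paper there is essentially no proof of this theorem: the text preceding it records that the generic Hilbert series was found with Macaulay2, that Mathematica's Gr\"obner-basis routine flags the seven exceptional parameter values, and then the theorem is simply stated after the phrase ``Calculating Hilbert series, we find.'' Everything is delegated to computer algebra.

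By contrast you supply genuine arguments at each stage. The product-of-relations identity $(A^5+B^5)(x_0x_1x_2x_3x_4)^2\in I$ is a clean way to see, without any machine computation, exactly why the seven exceptional parameters are the only ones and why the generic variety collapses to the five coordinate points. The Stanley--Reisner interpretation at the two monomial points both identifies the five lines and reads off $\tfrac{1+3t+t^2}{(1-t)^2}$ from the $f$-vector, replacing a black-box Hilbert-series computation with a one-line combinatorial count. For the five points $(1:-\omega^k:0)$ your ratio recursion $r_j=-\lambda r_{j+3}$ gives an explicit parametrisation of the extra $H_5$-orbit, something the paper never writes down. Your closing remark about the order-$10$ stabiliser of the line $C=0$ acting transitively on the five points $(1:-\omega^k:0)$ and swapping $(1:0:0)$ with $(0:1:0)$ is also not in the paper and is a nice economy.

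The one place you remain at the same level as the paper is the Hilbert series $\tfrac{1+4t+5t^2}{1-t}$ at the points $(1:-\omega^k:0)$: you correctly flag that showing the ten points impose independent conditions on quadrics (equivalently, that $I$ is already the radical ideal of the ten points) is the only substantive step left, and you offer the same fallback the paper uses, namely a Gr\"obner computation. That is an honest assessment, and matches the paper's own standard of rigour here.
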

For a point on the conic section $AB+C^2$, the relations from equation \ref{eq:koszul} determine a smooth genus 1 curve, except when $A=0$, $B=0$ or 
\begin{align*}
(A:B:C) &= (\omega^k (\omega^2+\omega^3),\omega^{-k}(\omega+\omega^4),1),k=0,\ldots 4,\\
(A:B:C) &= (\omega^k (\omega+\omega^4),\omega^{-k}(\omega^2+\omega^3),1),k=0,\ldots 4.
\end{align*}
These 12 points are of course the intersection points with the 6 lines from equation \ref{eq:6lines}. For every affine point $(a,\frac{-1}{a})$ excluding the 12 special points, the point $(0:1:a:-a:-1)$ and its $H_5$-orbit lies on the curve defined by \ref{eq:koszul}. Taking this point to be the point $O$, the corresponding curve is an elliptic curve $E$, and the $H_5$-orbit of $O$ gives an embedding of $\mathbb{F}_5 \times \mathbb{F}_5$ in $E$. It follows from calculations using Mathematica and Macaulay2 that every point on $\mathbf{V}(AB+C^2)$ determines an elliptic curve, except for 12 points, which are the intersection points of the 6 lines from equation \ref{eq:6lines}. A $PSL_2(5)$-orbit on this conic sections determines an isomorphism class of elliptic curves, but changes the chosen generators of $E[5]$. These new generators however have the same Weil pairing as the original generators (as they determine the representation in degree 1, which is unchanged) and therefore the conic section $\mathbf{V}(AB+C^2)-\{12 \text{ points}\}$ with the $PSL_2(5)$-action is a model for the modular curve $X(5)$. The extra 12 points give the compactification of $X(5)$. Summarizing, we have the following theorem:
\begin{theorem}
For the points on the curve $\mathbf{V}(AB+C^2)-\{12 \text{ points}\}$, the corresponding algebra $\mathfrak{C}(A:B:C)^!$ is the homogeneous coordinate ring of an elliptic curve $E$ embedded in $\mathbb{P}^4$ with $O=(0:1:a:-a:-1)$. Furthermore, the point $(A:B:C)$ determines an embedding of $\mathbb{F}_5 \times \mathbb{F}_5$ in $E$ and a fixed $\mathbb{F}_5$-basis $(e_1 \cdot O, e_2 \cdot O)$. Conversely, every point in $X(5)$ determines a unique point on $\mathbf{V}(AB+C^2)$ and thus $\mathbf{V}(AB+C^2)$ is a model for $X(5)$, with the 12 extra points the points needed to make the compactification $\overline{X(5)}$.
\end{theorem}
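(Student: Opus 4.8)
The plan is to recognise $\mathbf{V}(q_0,\ldots,q_4)$ as an elliptic normal quintic in $\mathbb{P}^4$, to identify the $H_5$-action on it with translation by $5$-torsion points, to read off that $(A:B:C)$ carries exactly the data of a point of $X(5)$, and to obtain the converse by decomposing the space of $H_5$-invariant quadric systems. So fix $(A:B:C)\in\mathbf{V}(AB+C^2)$ away from its twelve intersection points with the lines of \ref{eq:6lines}; there $C\neq 0$, the ten relations of $\mathfrak{C}(A:B:C)$ are independent, and $\mathfrak{C}(A:B:C)^!=\mathbb{C}[z_0,\ldots,z_4]/(q_0,\ldots,q_4)$ with the $q_i$ spanning a $5$-dimensional space of quadrics. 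A Gröbner basis computation gives Hilbert series $\frac{1+3t+t^2}{(1-t)^2}$, so $\mathbf{V}(q_\bullet)$ is a curve of degree $5$ and arithmetic genus $1$; using the $H_5$- and $PSL_2(5)$-symmetry to cut the Jacobian-criterion check down to a few orbits shows it is smooth, and upper semicontinuity of $h^0(\mathcal{O})$ in this flat family, together with geometric connectedness of one fibre, shows every fibre is connected, hence irreducible and nondegenerate. A smooth irreducible nondegenerate quintic of genus $1$ in $\mathbb{P}^4$ is an elliptic normal curve, hence projectively normal, so the $5$-dimensional space of quadrics through it generates its full homogeneous ideal; therefore $\mathfrak{C}(A:B:C)^!$ is the homogeneous coordinate ring of an elliptic curve $E\subset\mathbb{P}^4$. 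Substituting $(0:1:a:-a:-1)$ into the equations \ref{eq:koszul}, with $ab=-1$, makes each one collapse to a multiple of $1+ab$ or to $0=0$, so this point lies on $E$, and we take it as the origin $O$.

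For the $H_5$-action: since the relations of $\mathfrak{C}(A:B:C)$ are $H_5$-invariant, $H_5$ acts on $\mathfrak{C}(A:B:C)^!$ and hence on $E\subset\mathbb{P}^4$, and because $z=[e_1,e_2]$ acts on $\mathfrak{C}(A:B:C)_1$ by the scalar $\omega$, the induced action on $\mathbb{P}^4$ factors through $\mathbb{F}_5\times\mathbb{F}_5$. By the structure of the theta (Heisenberg) group of a degree-$5$ line bundle $L$ on an elliptic curve, $K(L)=E[5]$ and the translations $t_P$ with $P\in E[5]$ preserve the embedding; since $|Aut(E,O)|$ is coprime to $5$, the group of automorphisms of $\mathbb{P}^4$ stabilising $E$ has $E[5]$ as its only subgroup isomorphic to $(\mathbb{Z}/5)^2$, so $e_1,e_2$ must act by translation, by the points $P_1:=e_1\cdot O$ and $P_2:=e_2\cdot O$; these form an $\mathbb{F}_5$-basis of $E[5]$ because $e_1,e_2$ generate $\mathbb{F}_5\times\mathbb{F}_5$. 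The Weil pairing $\langle P_1,P_2\rangle$ is the commutator of lifts of $e_1,e_2$ inside the theta group, namely $\varphi(z)=\omega$, which is exactly the compatibility of the square in Section \ref{sub:Mod}. Hence $(A:B:C)$ determines the point $(E,(P_1,P_2),\omega)\in X(5)$.

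Conversely, a point of $X(5)$ — an elliptic curve together with generators $f_1,f_2$ of $E[5]$ of Weil pairing $\omega$ — yields, via the recipe of Section \ref{sub:Mod}, an $H_5$-equivariant closed embedding $E\hookrightarrow\mathbb{P}^4$ whose space of linear forms is, as an $H_5$-representation, the irreducible $V_1^*$; this embedding is unique up to scalar. Its quadric ideal $I_2(E)$ is a $5$-dimensional $H_5$-subrepresentation of $S^2 V_1^*$; since $V_1^*\otimes V_1^*$ is a multiple of the unique $5$-dimensional irreducible $W$ on which $z$ acts by $\omega^{-2}$, we have $S^2 V_1^*\cong W^{\oplus 3}$, and the $5$-dimensional subrepresentations of $S^2 V_1^*$ form $\mathbb{P}(Hom_{H_5}(W,S^2 V_1^*))\cong\mathbb{P}^2$; a short computation identifies this $\mathbb{P}^2$ with the moduli space, the subrepresentation spanned by $(q_0,\ldots,q_4)$ being the point $(A:B:C)$. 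The resulting map $X(5)\to\mathbb{P}^2$ has image contained in $\mathbf{V}(AB+C^2)\setminus\{12\text{ points}\}$, since by the earlier analysis these are exactly the points of $\mathbb{P}^2$ at which $\mathfrak{C}(A:B:C)^!$ is the coordinate ring of a smooth elliptic curve; it is surjective onto this set by the previous paragraph, and injective because the point of $\mathbb{P}^2$ reconstructs the embedded $H_5$-curve and hence, by the theta-group argument, the pair $(E,(f_1,f_2))$ up to isomorphism. Finally, as $(A:B:C)$ tends to one of the twelve points the quintic degenerates to a Néron $5$-gon — the Hilbert series $\frac{1+3t+t^2}{(1-t)^2}$, still that of a genus-$1$ curve, persisting, as the $(1:0:0)$ and $(0:1:0)$ cases above illustrate — and $X(5)$ has exactly $\frac{1}{2}\cdot 5^2(1-5^{-2})=12$ cusps; hence $\mathbf{V}(AB+C^2)\cong\mathbb{P}^1$ is a model for $\overline{X(5)}$.

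I expect the crux to be the interface between algebra and geometry: showing that the $H_5$-action placed on $\mathfrak{C}(A:B:C)^!$ by fiat is the geometric translation action of $E[5]$ on the embedded curve (through the Heisenberg/theta group of $(E,L)$ with $\deg L=5$), and then that the algebraic family $(A:B:C)\mapsto\mathfrak{C}(A:B:C)^!$ and the geometric construction of Section \ref{sub:Mod} are mutually inverse, so that $X(5)\to\mathbf{V}(AB+C^2)$ is a bijection. The smoothness and irreducibility verification in the first step is computationally heavy but routine, and once the theta-group identification and the representation-theoretic parametrisation of quadric systems are in place, the Weil-pairing normalisation and the cusp count are bookkeeping.
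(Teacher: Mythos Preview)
Your proof is correct and substantially more conceptual than the paper's own treatment. The paper does not give a proof after the theorem statement; rather, the theorem summarises the preceding discussion, which relies on Mathematica and Macaulay2 computations to verify that the Koszul dual defines a smooth genus $1$ curve away from the twelve intersection points, checks directly that $(0:1:a:-a:-1)$ is a solution, and then argues heuristically via the $PSL_2(5)$-action (orbits on the conic correspond to isomorphism classes of elliptic curves with the Weil pairing fixed by the degree-one representation) that the conic is a model for $X(5)$. You replace the computational verification with the theory of elliptic normal quintics (Hilbert series, Jacobian criterion, projective normality), and you supply two ingredients the paper leaves implicit: the theta-group argument identifying the $H_5$-action on $E$ with translation by $5$-torsion, and the representation-theoretic parametrisation $S^2V_1^*\cong W^{\oplus 3}$ that explains why $H_5$-invariant quadric systems form exactly the $\mathbb{P}^2$ of the moduli space, which in turn gives the inverse map $X(5)\to\mathbf{V}(AB+C^2)$. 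The paper's approach is quicker if one accepts the computer-algebra output and the $PSL_2(5)$-equivariance as sufficient evidence for the bijection; your approach explains \emph{why} the bijection holds and would transport more readily to the prime-$p$ case treated later in the paper.
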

\par Now, choose a line from equations \ref{eq:6lines}. This line will intersect $\mathbf{V}(AB+C^2)$ in exactly 2 points, $P_1$ and $P_2$. The commutative algebra corresponding to $P_1$ will describe the union of 5 lines that intersect 2 by 2. The intersection points will be a $H_5$-orbit of 5 elements and the configuration will be like figure \ref{fig:Configuration 1}. The point $P_2$ will also determine the union of 5 lines that intersect 2 by 2, with the intersection points the same as for $P_1$, but now the configuration will be like figure \ref{fig:Configuration 2}.
\subsection{Being Koszul}
This section gives a proof of the following theorem.
\begin{theorem}
Every $H_5$-Clifford algebra $\mathfrak{C}(A:B:C)$ is Koszul, except for the points on the 6 lines given by the $PSL_2(5)$-orbit of $C=0$ that do not lie on the conic section $\mathbf{V}(AB+C^2)$.
\end{theorem}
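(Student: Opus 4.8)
The plan is to stratify the moduli space $\mathbb{P}^2$ and argue separately on each piece, using throughout that $A$ is Koszul if and only if $A^!$ is, and that the right action of $PSL_2(5)$ produces isomorphic algebras and hence preserves Koszulness. This last point reduces the six lines in the $PSL_2(5)$-orbit of $C=0$ to the single line $C=0$, and reduces the twelve points where these lines meet $\mathbf{V}(AB+C^2)$ to the two points $(1:0:0)$ and $(0:1:0)$ on that line (and these two are themselves exchanged, up to isomorphism, by $x_i\mapsto x_{2i}$, which lies in $PSL_2(5)$ and sends $\mathfrak{C}(a,b)$ to $\mathfrak{C}(b,a)$). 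Hence it suffices to deal with four strata: (i) the complement of the six lines; (ii) $\mathbf{V}(AB+C^2)$ minus the twelve points; (iii) the twelve points; (iv) the line $C=0$ minus $\{(1:0:0),(0:1:0)\}$ — showing that (i), (ii), (iii) give Koszul algebras and (iv) never does.

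On stratum (i), $\mathfrak{C}(A:B:C)$ is a graded Clifford algebra with base-point-free quadric system, so by Theorem~\ref{th:Reg} the five quadrics $q_0,\dots,q_4$ cut out the empty set in $\mathbb{P}^4$; equivalently they form a regular sequence in $\mathbb{C}[z_0,\dots,z_4]$, so $\mathfrak{C}(A:B:C)^!$ is a complete intersection of quadrics and therefore Koszul, whence $\mathfrak{C}(A:B:C)$ is Koszul. On stratum (ii), the theorem proved above for $\mathbf{V}(AB+C^2)$ identifies $\mathfrak{C}(A:B:C)^!$ with the homogeneous coordinate ring of an elliptic normal curve of degree $5$ in $\mathbb{P}^4$, which is Koszul, as is classical for elliptic normal curves of degree $\ge 4$; so again $\mathfrak{C}(A:B:C)$ is Koszul. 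On stratum (iii) it suffices, by the reductions above, to treat $(1:0:0)$: there the relations~\ref{eq:lininf} of $\mathfrak{C}(A:B:C)^!$ specialise ($A=1,B=0$) to monomials, so $\mathfrak{C}(1:0:0)^!$ is a quadratic monomial algebra, which is automatically Koszul, and hence $\mathfrak{C}(1:0:0)$ is Koszul.

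It remains to treat stratum (iv), i.e.\ to prove $\mathfrak{C}(A:B:0)$ is not Koszul for every point of $C=0$ other than $(1:0:0)$ and $(0:1:0)$. At such a point $\mathfrak{C}(A:B:0)$ has relations $x_i^2=0$ together with the five relations $B(x_ix_{i+3}+x_{i+3}x_i)=A(x_{i+1}x_{i+2}+x_{i+2}x_{i+1})$, and its Koszul dual is the quadratic binomial algebra $\mathbb{C}[z]/(Az_1z_4+Bz_2z_3,\dots)$, which (incidentally) can be rescaled to a lattice-ideal algebra precisely when $B/A\in-\mu_5$, i.e.\ precisely at the five points $(1:-\omega^k:0)$. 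Using the classification of the seven exceptional points of $C=0$ from the earlier theorem, the action of the stabiliser of $C=0$ in $PSL_2(5)$ (dihedral of order $10$, permuting the points $(1:-\omega^k:0)$ transitively), and the fact that over the complement of the seven exceptional points the family is flat with $H_{\mathfrak{C}^!}(t)=\frac{1+4t+5t^2-5t^4}{1-t}$, it is enough to prove non-Koszulness at one generic point and at $(1:-1:0)$. For each of these two points I would compute, with a computer-algebra system, the minimal graded free resolution of the trivial module over $\mathfrak{C}(A:B:0)$ and exhibit a syzygy of homological degree $i$ in internal degree $j>i$, that is $\operatorname{Tor}^{\mathfrak{C}(A:B:0)}_{i,j}(\mathbb{C},\mathbb{C})\neq 0$ for some $j>i$ — or, should a non-linear term first appear through a drop of the Hilbert function, equivalently show $H_{\mathfrak{C}(A:B:0)}(t)\neq 1/H_{\mathfrak{C}(A:B:0)^!}(-t)$.

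The main obstacle is exactly this last verification. A crude Hilbert-series count does not suffice: one checks from the formula for $H_{\mathfrak{C}^!}$ that the would-be Koszul series $1/H_{\mathfrak{C}(A:B:0)^!}(-t)=\frac{1+t}{1-4t+5t^2-5t^4}$ has only non-negative coefficients and in fact agrees with $\dim\mathfrak{C}(A:B:0)_n$ in the first several degrees, so the failure of Koszulness shows up only in a higher-degree computation or in the individual Betti numbers; once found at the two representative points it propagates to all of stratum (iv) by upper-semicontinuity of the Betti numbers over the flat part and by the dihedral symmetry permuting the points $(1:-\omega^k:0)$. The remaining points to be checked are purely bookkeeping — that the twelve intersection points are distinct, and that the six lines form a single $PSL_2(5)$-orbit — and both are immediate from equations~\ref{eq:6lines}.
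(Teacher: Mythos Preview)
Your treatment of the three Koszul strata (the regular locus, the elliptic-curve points on $\mathbf{V}(AB+C^2)$, and the twelve cusp points) is essentially the paper's: complete-intersection Koszulness for a base-point-free quadric system, Koszulness of the homogeneous coordinate ring of an elliptic normal curve, and the quadratic-monomial argument, respectively.

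The divergence is entirely on stratum (iv). You propose to locate a non-linear syzygy by direct machine computation of the minimal resolution, and you explicitly assert that ``a crude Hilbert-series count does not suffice''. The paper shows that it \emph{does} suffice, via the one ingredient you do not invoke: the graded Clifford quotient. At an affine point $(a,b)$ on any of the five lines with $C\neq 0$, the quadratic algebra $\mathfrak{C}(a,b)$ surjects onto the graded Clifford algebra $\overline{\mathfrak{C}}$, which is free of rank $2^5$ over $\mathbb{C}[x_0^2,\dots,x_4^2]$ and therefore has Hilbert series $\frac{1}{(1-t)^5}$ regardless of whether the quadric system is base-point-free. The kernel of $\mathfrak{C}\twoheadrightarrow\overline{\mathfrak{C}}$ is generated by the degree-$3$ commutators $[x_i^2,x_j]$. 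Now assume $\mathfrak{C}$ Koszul: then $H_{\mathfrak{C}}(t)=\frac{1+t}{1-4t+5t^2-5t^4}$, so $\dim\mathfrak{C}_3=35=\dim\overline{\mathfrak{C}}_3$; hence every $[x_i^2,x_j]$ already vanishes in $\mathfrak{C}$, so $\mathfrak{C}=\overline{\mathfrak{C}}$ and $H_{\mathfrak{C}}(t)=\frac{1}{(1-t)^5}$, contradicting the degree-$5$ coefficient ($126$ versus $130$). The fifteen line--line intersection points are handled by the identical trick, the contradiction now appearing already in degree $4$ with the would-be Koszul series $\frac{1+t}{1-4t+5t^2}$.

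So the step you flag as ``the main obstacle'' and leave to a future computer run is in fact unnecessary. Note also that your reduction to the single line $C=0$ is precisely the choice that hides this argument, since there $x_i^2=0$ and no nontrivial Clifford quotient is available; working instead on any of the other five lines (and transporting back by the $PSL_2(5)$-action you already use) makes the Hilbert-series contradiction immediate.
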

\begin{proof}
When $\mathfrak{C}(A:B:C)$ is regular (the generic case), we can use theorem \ref{th:Reg} and apply theorem 2.2 of \cite{shelton2001koszul} to conclude that  $\mathfrak{C}(A:B:C)$ is regular. When the Koszul dual is an elliptic curve, it follows from the Koszulity of the homogeneous coordinate ring of an elliptic curve that the algebras of the form (choosing representatives such that the sum of the indices is 0)
\begin{align*}
x_1x_4+x_4x_1 = a x_0^2, && x_1x_4+x_4x_1 = \frac{-1}{a} x_0^2
\end{align*}
are indeed Koszul whenever the point $(a,\frac{-1}{a})$ determines an elliptic curve. For the 12 points on $\mathbf{V}(AB+C^2)$ that determine 5 lines, Koszulity follows from the fact that in this case $\mathfrak{C}(A:B:C)$ is isomorphic to $\mathfrak{C}(1:0:0)$. The relations of $\mathfrak{C}(1:0:0)^!$ are given by 5 monomials of degree 2 (and the commutator relations of course). Then theorem 3.15 of \citep{conca2013koszul} gives us that $\mathfrak{C}(1:0:0)^!$ is indeed Koszul, but then $\mathfrak{C}(1:0:0)$ is also Koszul.
\par These are all the Koszul algebras. Assume that $(A:B:C)$ lies on 1 of the 6 lines from \ref{eq:koszul} (excluding the 12 points lying on the conic section and the 15 points at the intersections) and that $\mathfrak{C}(A:B:C)$ is Koszul. Then $\mathfrak{C}(A:B:C)$ has as Hilbert series
\begin{displaymath}
\frac{1+t}{1-4t+5t^2-5t^4}= 1+5 t+15 t^2+35 t^3+70 t^4+130 t^5+\ldots
\end{displaymath}
This would mean that the Hilbert series of this algebra is equal to $\frac{1}{(1-t)^5}$ up to degree 4. But then it follows from the fact that this algebra has as a quoti\"ent a graded Clifford algebra that it should have Hilbert series $\frac{1}{(1-t)^5}$, which is not the case.
\par The same argument also applies when the Koszul dual of $\mathfrak{C}(A:B:C)$ determines 10 points, because now its Hilbert series should be
\begin{displaymath}
\frac{1+t}{1-4t+5t^2} = 1+5 t+15 t^2+35 t^3+65 t^4+85 t^5+\ldots
\end{displaymath}
and this is clearly not $\frac{1}{(1-t)^5}$.
\end{proof}
\subsection{The quantum planes and their relationship with the 6 lines}
$PSL_2(5)$ has 6 5-Sylow groups, each one stabilizing one of the 6 lines of equation \ref{eq:6lines}. However, such a group doesn't fix the entire line, but rather 2 points (the intersection with $\mathbf{V}(AB+C^2)$). Since these groups are cyclic and they are all conjugated, it suffices to work with one 5-Sylow group, e.g. the group generated by $VU$. The corresponding matrix is given by
\begin{displaymath}
\begin{bmatrix}
1 & 0 \\
1 & 1 
\end{bmatrix}
\leftrightarrow\begin{bmatrix}
\omega^4 & 0 & 0 \\ 0 & \omega & 0
\\ 0&0&1
\end{bmatrix},
\end{displaymath}
so the third point fixed by this matrix is $(0:0:1)$, with corresponding algebra the quantum space $\mathfrak{C}(0:0:1)$ with relations $x_k x_l +x_l x_k = 0, k \neq l$. The other points in the $PSL_2(5)$-orbit of $(0:0:1)$ are given by $(2\omega^k:2\omega^{-k}:1), k = 0,\ldots 4$. This gives us 6 points with corresponding Clifford algebra isomorphic to the quantum space $\mathfrak{C}(0:0:1)$, with the $PSL_2(5)$-action the same as its action on $\mathbb{P}^2_{\mathbb{F}_5}$. 
\par Again we have a certain duality regarding these quantum algebras and the 6 lines from equation \ref{eq:6lines}. I will explain it using the example of $(0:0:1)$ and the line $C=0$. The point modules of the quantum plane in this case are parametrized by the full graph on 5 points, with its vertices given by the $H_5$-orbit of $(1:0:0:0:0)$. The configuration is given by figure \ref{fig:fullgraph}.
\begin{center}
\begin{figure}[H]
\begin{tikzpicture}[style=thick]
\draw (18:3cm) circle (2pt)  node[above right=-1.75pt]{$(0:1:0:0:0)$} -- (90:3cm);
\draw (18:3cm) circle (2pt) -- (90+72:3cm);
\draw (18:3cm) circle (2pt) -- (90+2*72:3cm);
\draw (90:3cm) circle (2pt) -- (90+3*72:3cm);
\draw (90:3cm) circle (2pt) -- (90+2*72:3cm);
\draw (90+72:3cm) circle (2pt) -- (90+3*72:3cm);
\draw (90:3cm) circle (2pt)  node[above]{$(1:0:0:0:0)$} -- (90+72:3cm);
\draw (90+72:3cm) circle (2pt)  node[above left=-1.75pt]{$(0:0:0:0:1)$} -- (90+72+72:3cm);
\draw (90+72+72:3cm) circle (2pt)  node[below]{$(0:0:0:1:0)$} -- (90+72+72+72:3cm);
\draw (90+72+72+72:3cm) circle (2pt)  node[below]{$(0:0:1:0:0)$} -- (90+72+72+72+72:3cm);
\end{tikzpicture}
\caption{The full graph on 5 points}
\label{fig:fullgraph}
\end{figure}
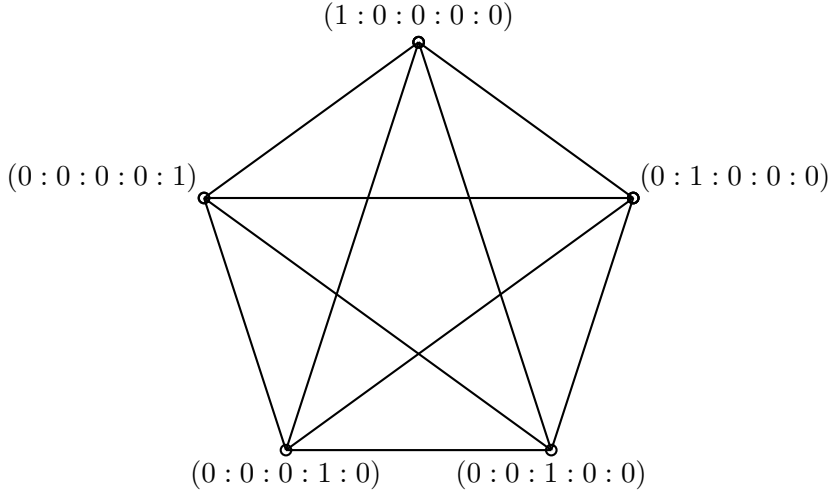
\end{center}
%\begin{remark}
%The point modules of a regular Clifford algebra of global dimension 5 are parametrized by the points in $Proj(\mathbb{C}[x_0^2,x_1^2,x_2^2,x_3^2,x_4^2])\cong \mathbb{P}^4$ for which the specialized Clifford algebra has rank 2 or 1 according to theorem 1.7 of \cite{vancliff1998some}. Since the symmetric matrix associated to the algebra $C(0:0:1)$ is given by
%\begin{displaymath}
%\begin{bmatrix}
%2x_0^2 &      0 & 0      & 0      & 0 \\
%0      & 2x_0^2 & 0      & 0      & 0 \\
%0      &      0 & 2x_2^2 & 0      & 0 \\
%0      &      0 & 0      & 2x_3^2 & 0 \\
%0      &      0 & 0      & 0      & 2x_4^2
%\end{bmatrix},
%\end{displaymath}
%it is obvious that figure \ref{fig:fullgraph} gives all the point modules.
%\end{remark}
The graph of figure \ref{fig:fullgraph} is the union of figure \ref{fig:Configuration 1} and \ref{fig:Configuration 2}, which were projective varieties determined by the $\mathfrak{C}(1:0:0)^!$ and $\mathfrak{C}(0:1:0)^!$. More generally, every point on the line $C=0$ has the $H_5$-orbit of $(1:0:0:0:0)$ in the zeroset of $\mathfrak{C}(A:B:C)^!$, which are the vertices of the graph of figure \ref{fig:fullgraph}. Summarizing, we have
\begin{theorem}
There is a 1-to-1 correspondence between the $PSL_2(5)$-orbit of $(0:0:1)$ and the $PSL_2(5)$-orbit of the line $C=0$. Moreover, for every line in the $PSL_2(5)$-orbit of the line $C=0$, there are 2 points, the intersections with $\mathbf{V}(AB+C^2)$, whose Koszul dual determines a graph on 5 points. The union of these 2 graphs gives the full graph on 5 points, which is the point variety of the corresponding algebra isomorphic to the quantum space.\\
The 2 points of $\mathbf{V}(AB+C^2)$ that correspond to the same quantum space are determined by the fact that they are fixed by the same cyclic subgroup of order $5$ in $PSL_2(5)$.
\end{theorem}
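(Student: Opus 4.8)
The plan is to reduce the whole statement to the single pair consisting of the line $\{C=0\}$ and the point $(0:0:1)$, and then to transport it along $PSL_2(5)$; beyond the earlier theorems the only new ingredients are a short computation in $PSL_2(5)\cong A_5$ and one combinatorial identification of graphs.

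First I would set up the group-theoretic bijection. The group $A_5$ has exactly six $5$-Sylow subgroups and exactly six subgroups of order $10$, each subgroup of order $10$ being self-normalising and the normaliser of a unique $5$-Sylow. The six lines of \ref{eq:6lines} form a single $PSL_2(5)$-orbit, and since $\langle VU\rangle$ acts on $\mathbb{P}^2$ as $\mathrm{diag}(\omega^4,\omega,1)$ it preserves $\{C=0\}=\{(A:B:0)\}$, so the stabiliser of that line is $N$, the normaliser of $\langle VU\rangle$ in $PSL_2(5)$, which has order $10$; hence the orbit of lines is isomorphic to $PSL_2(5)/N$ as a $PSL_2(5)$-set. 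The $PSL_2(5)$-orbit of $(0:0:1)$ consists of the six points $(0:0:1)$ and $(2\omega^k:2\omega^{-k}:1)$ for $k=0,\dots,4$, with $(0:0:1)$ fixed by $\langle VU\rangle$ and hence by $N$, so this orbit is also isomorphic to $PSL_2(5)/N$. Because $N$ is self-normalising there is a unique morphism of $PSL_2(5)$-sets from the orbit of lines to the orbit of $(0:0:1)$ and it is a bijection; concretely it matches the line preserved by a $5$-Sylow $S$ with the unique point of the second orbit fixed by $S$ (a $5$-Sylow has exactly one fixed point on each of these six-element orbits, since $A_5$ is simple, and distinct $5$-Sylows give distinct fixed points because a group of order $10$ contains only one $5$-Sylow). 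This is the asserted $1$-to-$1$ correspondence. It also pins down the quantum space attached to each line: $(0:0:1)$ is $\mathbb{C}_{-1}[x_0,\dots,x_4]$, hence so is every point of its orbit, the $PSL_2(5)$-action being by algebra isomorphisms that merely twist the degree $1$ part by an automorphism of $H_5$ and so leave the underlying algebra, and its point variety, unchanged.

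Next I would prove the ``moreover'' part for $\ell=\{C=0\}$. This line meets $\mathbf{V}(AB+C^2)$ in exactly the two points $(1:0:0)$ and $(0:1:0)$, since on $C=0$ the conic reduces to $AB=0$. By the theorem describing the points of the line $C=0$, $\mathfrak{C}(1:0:0)^!$ cuts out the five-line configuration $\bigcup_{i=0}^{4}\mathbf{V}(x_i,x_{i+1},x_{i+2})$ of figure \ref{fig:Configuration 1} and $\mathfrak{C}(0:1:0)^!$ cuts out $\bigcup_{i=0}^{4}\mathbf{V}(x_i,x_{i+1},x_{i+3})$ of figure \ref{fig:Configuration 2}, both with vertex set the $H_5$-orbit of $(1:0:0:0:0)$. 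A direct check shows that the edges of the first configuration are exactly the cyclic-distance-$1$ pairs of these five vertices and those of the second exactly the cyclic-distance-$2$ pairs, so their union is all $\binom{5}{2}=10$ pairs, that is the full graph of figure \ref{fig:fullgraph}, which by the example on point modules of $\mathbb{C}_{-1}[x_0,\dots,x_4]$ is precisely the point variety of that quantum space -- the one attached to $\ell$ under the correspondence above, since $(0:0:1)$ is fixed by the same $5$-Sylow $\langle VU\rangle$ that preserves $\ell$. For an arbitrary line $\ell'$ in the orbit I would choose $g\in PSL_2(5)$ with $g\cdot\{C=0\}=\ell'$; then $g$ sends $\{(1:0:0),(0:1:0)\}$ to the two points of $\ell'\cap\mathbf{V}(AB+C^2)$ (the conic is $PSL_2(5)$-invariant, being the model of $X(5)$) and sends $(0:0:1)$ to the quantum-space point attached to $\ell'$, and Koszul duals, point varieties and $H_5$-orbits all transport along $g$ up to the harmless $H_5$-twist, so the statement for $\ell'$ follows from that for $\{C=0\}$.

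Finally, for the last sentence: the $5$-Sylow $S$ preserving a line $\ell$ fixes exactly two points of $\mathbb{P}^2$ lying on $\mathbf{V}(AB+C^2)$ -- for $\ell=\{C=0\}$, $S=\langle VU\rangle$ has the three fixed points $(1:0:0)$, $(0:1:0)$, $(0:0:1)$, of which the first two lie on the conic and the third is the quantum-space point -- and these two are forced to be the points of $\ell\cap\mathbf{V}(AB+C^2)$, while conversely every cyclic subgroup of order $5$ arises this way; hence the two points of $\mathbf{V}(AB+C^2)$ whose Koszul duals assemble into the point variety of a given quantum space are exactly the two points fixed by the common $5$-Sylow. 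I expect the main obstacle to be the equivariance bookkeeping in the preceding paragraph: moving within a $PSL_2(5)$-orbit twists the $H_5$-action in degree $1$ by an automorphism of $H_5$, exactly as in the $n=3$ discussion, so one must check that the underlying projective varieties and the set-theoretic $H_5$-orbit structure survive this twist even though the labelling of an orbit by group elements does not; the remaining steps -- the finite computation in $A_5$ and the graph identification -- are routine.
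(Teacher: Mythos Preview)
Your proposal is correct and follows essentially the same line as the paper: the paper does not give a formal proof after the theorem but rather states it as a summary of the preceding discussion, which amounts to working out the single case of the line $\{C=0\}$, the point $(0:0:1)$, and the pair $(1:0:0),(0:1:0)$ on the conic, observing that figures \ref{fig:Configuration 1} and \ref{fig:Configuration 2} together make the full graph of figure \ref{fig:fullgraph}, and noting that the six $5$-Sylow subgroups of $PSL_2(5)$ each fix one line, two conic points, and one off-conic quantum-space point. Your write-up makes explicit two things the paper leaves implicit---the identification of both six-element orbits with $PSL_2(5)/N$ for the self-normalising order-$10$ subgroup $N$, and the transport of the local statement along $PSL_2(5)$ via the linear change of generators that realises the twist---so your argument is a faithful but more rigorous rendering of the paper's, not a different route.
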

\section{Generalities for prime dimension}
In this section, $\omega = e^{\frac{2\pi i}{p}}$ with $p\geq 5$ prime. Many things described in the previous section for the case $p=5$ can be arbitrarily extended to every dimension $p$, $p$ prime. The relations we are now interested in are given by the $H_p$-action on (indices are taken $\bmod p$)
\begin{align*}
a_0(x_0x_i + x_i x_0)      = a_i x_{\frac{i}{2}}^2, 0 < i \leq \frac{p-1}{2},
\end{align*}
so our moduli space is given by $\mathbb{P}^{\frac{p-1}{2}}$ and again we have a $PSL_2(p)$ action on this space.
\subsection{The quantum spaces}
We will prove here the following theorem.
\begin{theorem}
There are exactly $p+1$ points in $\mathbb{P}^{\frac{p-1}{2}}$ for which the corresponding algebra $\mathfrak{C}(a_0:\ldots:a_{\frac{p-1}{2}})$ is isomorphic to the algebra with relations $x_i x_j + x_jx_i = 0,i \neq j$ and they form an orbit under the $PSL_2(p)$-action.
\label{th:quantum}
\end{theorem}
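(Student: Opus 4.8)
The plan is to generalize the two ingredients that made the $p=5$ case work: first, identify which points $(a_0:\ldots:a_{(p-1)/2})$ give the quantum space $\mathbb{C}_{-1}[x_0,\ldots,x_{p-1}]$, and second, show this set is a single $PSL_2(p)$-orbit of size $p+1$. The starting observation is that the quantum space is recovered precisely from the point $a_0 = 1$, $a_1 = \cdots = a_{(p-1)/2} = 0$: setting all $a_i$ with $i>0$ to zero kills every relation of the form $x_0x_i + x_ix_0 = a_i x_{i/2}^2$ except it forces $x_0x_i + x_ix_0 = 0$, and applying the $H_p$-action (which permutes indices cyclically via $e_1$) forces all anticommutators to vanish, giving exactly $\mathfrak{C}(1:0:\ldots:0)\cong \mathbb{C}_{-1}[x_0,\ldots,x_{p-1}]$. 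So one point is accounted for by inspection; the remaining $p$ points should be its images under the $PSL_2(p)$-action described at the start of the section.

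Next I would compute the orbit of $(1:0:\ldots:0)$ under $PSL_2(p)$. Since $PSL_2(p)$ acts on $\mathbb{P}^{(p-1)/2}$ (this is a right action coming from twisting the Heisenberg morphism by automorphisms of $H_p$, exactly as in the $n=3$ and $n=5$ sections), the orbit of the point corresponding to $\mathbb{C}_{-1}$ consists of all points whose algebras are isomorphic to $\mathbb{C}_{-1}$ \emph{and} arise by such a twist — and since any isomorphism of these $H_p$-Clifford algebras preserving the degree-1 $H_p$-module structure up to a Heisenberg automorphism is exactly such a twist, the set of all points giving $\mathbb{C}_{-1}$ is precisely this orbit. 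To see the orbit has size exactly $p+1$, I would identify the stabilizer of $(1:0:\ldots:0)$: the quantum space's point modules form the full graph on the $H_p$-orbit of $(1:0:\ldots:0)$ (as recorded in the Example in the preliminaries), and the Heisenberg automorphisms fixing this point up to scalar form a Borel-type subgroup — concretely, the upper-triangular subgroup of $PSL_2(p)$, which has order $p(p-1)/2$. Since $|PSL_2(p)| = p(p^2-1)/2$, the orbit has size $p+1$, matching the $p+1$ points of $\mathbb{P}^1_{\mathbb{F}_p}$ on which this Borel is a point stabilizer.

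The main obstacle is the \emph{completeness} claim: showing there is no point \emph{outside} this orbit whose $H_p$-Clifford algebra is isomorphic to $\mathbb{C}_{-1}[x_0,\ldots,x_{p-1}]$. An isomorphism between two such algebras need not a priori respect the $H_p$-structure, so one must argue that the $H_p$-action on $\mathbb{C}_{-1}$ is rigid enough: the degree-1 part of $\mathbb{C}_{-1}$ carries the simple representation $V_1$ with $z$ acting as $\omega I_p$, and any graded algebra automorphism of $\mathbb{C}_{-1}$ normalizes its (essentially unique) Heisenberg action, so an isomorphism $\mathfrak{C}(a_0:\ldots) \cong \mathbb{C}_{-1}$ transports the $H_p$-action on $\mathfrak{C}(a_0:\ldots)$ to one on $\mathbb{C}_{-1}$ differing from the standard one by an automorphism of $H_p$ fixing $z$ — i.e., by an element of $PSL_2(p)$ (the $-I_2$ issue being handled exactly as in the $p=5$ discussion). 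Hence the point lies in the $PSL_2(p)$-orbit of $(1:0:\ldots:0)$. I would close by counting: the orbit has size $p+1$ by the stabilizer computation above, and the identification of the orbit with $\mathbb{P}^1_{\mathbb{F}_p}$ (stated as one of the main theorems, to be proved via the correspondence between $\mathbb{F}_p$-bases of $E[p]$ and these points) is compatible with the $PSL_2(p)$-action, completing the proof.
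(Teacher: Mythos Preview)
Your approach diverges from the paper's, and the completeness step has a genuine gap.

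You assert that every graded automorphism of $\mathbb{C}_{-1}[x_0,\ldots,x_{p-1}]$ normalizes the Heisenberg action. This is false: a linear map on degree~$1$ preserves the relations $x_ix_j+x_jx_i=0$ ($i\neq j$) iff it is a monomial matrix, so the degree-$1$ automorphism group is $(\mathbb{C}^*)^p\rtimes S_p$; a transposition of two generators lies in this group but conjugates the cyclic shift $e_1$ to a non-cyclic permutation and hence does not normalize the image of $H_p$. Thus the transported $H_p$-action on $\mathbb{C}_{-1}$ need not equal (standard action)$\,\circ\,\alpha$ for any $\alpha\in\mathrm{Aut}(H_p)$, and your deduction that the original parameter point lies in the $PSL_2(p)$-orbit does not follow. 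Nor can one repair this by claiming that all $H_p$-actions on $\mathbb{C}_{-1}$ with degree-$1$ part $V_1$ are conjugate by a graded automorphism: via Schur's lemma on degree~$1$ that would force every quantum-space point to equal $(1:0:\ldots:0)$ on the nose, contradicting the theorem. Your stabilizer claim in the orbit-size step is likewise only asserted.

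The paper sidesteps rigidity entirely. For the upper bound it uses an intrinsic invariant of the Clifford structure: the locus in $\mathbb{P}^{p-1}=\mathrm{Proj}\,\mathbb{C}[x_0^2,\ldots,x_{p-1}^2]$ where the associated quadratic form has rank~$1$. For the quantum space this locus is exactly the $p$ coordinate points, a single $H_p$-orbit. For any $\mathfrak{C}(a_0:\ldots:a_{(p-1)/2})$ isomorphic to the quantum space the rank-$1$ locus again has $p$ points and, being $H_p$-stable, must be one of the $p+1$ size-$p$ orbits classified in Lemma~\ref{lem:fixpoint}; imposing rank~$1$ along a fixed such orbit determines the $a_i$ uniquely, so there are at most $p+1$ candidates. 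The lower bound is the concrete computation of Lemma~\ref{lem:p1elem}: $U$ sends $(1:0:\ldots:0)$ to $(1:2:\ldots:2)$, and $VU$ fixes the former but not the latter, so the $PSL_2(p)$-orbit already contains at least $1+p$ points.
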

Before we prove this, some considerations must be made. First of all, the elements of $\mathbb{P}^{p-1}$ with a nontrivial stabilizer in $H_p$ must be found. This is equivalent to finding the eigenvectors of all the elements of $H_p$. Since a central element has every vector as eigenvector (and works therefore as the identity on $\mathbb{P}^{p-1}$), it is sufficient to consider elements of the form $e_1^k e_2^l$. Since $e_1 e_2 = z e_2 e_1$ with $z$ central, we can take powers of $(e_1^k e_2^l)^m$ until we get something of the form $z^r e_1^{-1} e_2^t$, unless $k=0$. Disregarding the part $z^r$, it suffices to determine the eigenvectors for the elements $e_1^{-1} e_2^t,t=0,\ldots, p-1$ and the element $e_2$. Of course, points belonging to the same orbit have the same stabilizer (since the $H_p$-action on $\mathbb{P}^{p-1}$ is actually an action of $\mathbb{F}_p \times \mathbb{F}_p$ and this group is commutative), so it is sufficient to give one representative of such an orbit. This means we have a total of $p+1$ orbits in $\mathbb{P}^{p-1}$ with the property that such an orbit consists of $p$ elements instead of $p^2$ elements. 
\begin{lemma}
The fixed points of $e_1^{-1}e_2^k, k=0,\ldots,p-1$ are given by the $H_p$-orbit of the following element:
\begin{displaymath}
(1:\omega^k:\omega^{3k}:\ldots : \omega^{k\frac{p(p-1)}{2}}),
\end{displaymath}
which has as its $i$th co\"ordinate $\omega^{k\frac{i(i+1)}{2}}$, if we start counting from 0.
The fixed points of $e_2$ are given by the $H_p$-orbit of $(1:0:\ldots:0)$. These are all the points with a nontrivial stabilizer.
\label{lem:fixpoint}
\end{lemma}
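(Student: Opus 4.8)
The plan is to diagonalise, in the $p$-dimensional representation $V=\mathbb{C}x_0+\cdots+\mathbb{C}x_{p-1}$, each of the $p+1$ elements $e_2$ and $e_1^{-1}e_2^{k}$ ($0\le k\le p-1$): a point $[v]\in\mathbb{P}(V)$ is fixed by $g\in H_p$ exactly when $v$ is an eigenvector of $g$, and, as noted just above the statement, the cyclic subgroups of order $p$ generated by these elements modulo $\langle z\rangle$ are precisely the $p+1$ lines through the origin of $\mathbb{F}_p\times\mathbb{F}_p$; since the stabiliser of any $[v]$ in $H_p/\langle z\rangle$ is either trivial or (by irreducibility of $V$) one such line, $[v]$ has nontrivial stabiliser iff $v$ is an eigenvector of one of these $p+1$ elements. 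The element $e_2=\operatorname{diag}(1,\omega,\ldots,\omega^{p-1})$ has $p$ distinct eigenvalues, so its eigenlines are $[x_0],\ldots,[x_{p-1}]$; as $x_i=e_1^{-i}\cdot x_0$, these form the single $H_p$-orbit of $(1:0:\cdots:0)$.

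Now set $T_k:=e_1^{-1}e_2^{k}$. From $e_1\cdot x_i=x_{i-1}$ and $e_2\cdot x_i=\omega^{i}x_i$ one computes $T_k\cdot x_i=\omega^{ik}x_{i+1}$ (indices mod $p$). Writing a would-be eigenvector as $v=\sum_i c_i x_i$, the equation $T_k v=\lambda v$ is equivalent to the recursion $\lambda c_j=\omega^{(j-1)k}c_{j-1}$ for all $j\in\mathbb{Z}/p$. This forces $c_0\neq 0$ (otherwise all $c_j$ vanish) and $\lambda\neq 0$, and after normalising $c_0=1$ it yields $c_j=\lambda^{-j}\omega^{k\binom{j}{2}}$. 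The only remaining constraint is the cyclic closure $c_p=c_0$; here one uses that $p$ is odd, so $\binom{p}{2}=p\cdot\tfrac{p-1}{2}\equiv 0\pmod p$ and hence $\omega^{k\binom{p}{2}}=1$, leaving $\lambda^{p}=1$. Thus $T_k$ has a one-dimensional eigenline for each $\lambda\in\mu_p$ and nothing else — equivalently, $T_k^{\,p}=\mathrm{id}$ with $p$ distinct eigenvalues.

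It then remains to recognise the point in the statement and to show the eigenlines form one $H_p$-orbit. Taking $\lambda=\omega^{-k}$ in the formula above gives $c_i=\omega^{k(i+\binom{i}{2})}=\omega^{k\binom{i+1}{2}}$, which is exactly the listed point $w=(1:\omega^{k}:\omega^{3k}:\cdots)$; so $T_k\cdot w=\omega^{-k}w$ and $[w]$ is one of the $p$ fixed points of $T_k$. For transitivity I would use $e_1e_2=\omega\,e_2e_1$ on $V$ (equivalently, $z$ acts as $\omega\cdot\mathrm{id}$) to obtain $e_1T_ke_1^{-1}=\omega^{k}T_k$; hence if $T_kv=\lambda v$ then $T_k(e_1v)=\omega^{-k}\lambda\,(e_1v)$, so $e_1$ permutes the $p$ eigenlines by $\lambda\mapsto\omega^{-k}\lambda$. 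When $k\neq 0$ this is a single $p$-cycle, so $\langle e_1\rangle$, a fortiori $H_p$, already acts transitively on the fixed points of $T_k$; when $k=0$ one has $T_0=e_1^{-1}$ and the analogous identity $e_2T_0e_2^{-1}=\omega T_0$, so $e_2$ supplies the $p$-cycle instead. Either way the $H_p$-orbit of $[w]$ has exactly $p$ elements and equals the full fixed-point set of $e_1^{-1}e_2^{k}$, and combining this with the first paragraph shows that these $p+1$ orbits are all the points of $\mathbb{P}(V)$ with nontrivial stabiliser.

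The one genuinely non-formal step is the orbit assertion in the third paragraph: that the $p$ eigenlines of $e_1^{-1}e_2^{k}$ lie in a single $H_p$-orbit rather than being distributed among several. This is precisely what the conjugation identity $e_1T_ke_1^{-1}=\omega^{k}T_k$ (and its $k=0$ analogue with $e_2$) delivers; everything else is the elementary recursion for $c_j$ together with the fact that $p\mid\binom{p}{2}$ for odd $p$.
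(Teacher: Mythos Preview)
Your proof is correct and covers the same ground as the paper, but the emphasis differs. The paper simply verifies by a direct coordinate calculation that the listed point is fixed by $e_1^{-1}e_2^{k}$, and then relies on the structural remarks made just before the lemma: since the induced action on $\mathbb{P}^{p-1}$ is by the \emph{abelian} group $\mathbb{F}_p\times\mathbb{F}_p$, points in the same orbit have the same stabiliser, and by orbit--stabiliser the orbit of the listed point has exactly $p$ elements; as the eigenvalues of $e_1^{-1}e_2^{k}$ are distinct there are exactly $p$ fixed points, so the orbit \emph{is} the fixed set. You instead solve the eigenvector recursion explicitly to produce all $p$ eigenlines, and you establish transitivity via the conjugation identity $e_1T_ke_1^{-1}=\omega^{k}T_k$ (and its $k=0$ analogue). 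Both arguments are fine: the paper's orbit--stabiliser route is shorter and conceptually cleaner, while your recursion plus conjugation is more explicit and self-contained, and in particular makes visible the eigenvalue $\lambda=\omega^{-k}$ attached to the listed point.
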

\begin{proof}
We again identify the image of $e_1$ and $e_2$ in $\mathbb{F}_p \times \mathbb{F}_p$ with the generators of $H_p$. Since the order of $\mathbb{F}_p \times \mathbb{F}_p$ is $p^2$, every point in $\mathbb{P}^{p-1}$ has as its stabilizer the entire group, nothing, or a cyclic group of order $p$. The entire group is impossible, since the action of $H_p$ on $\mathbb{C}^p$ was irreducible. Every cyclic group of order $p$ in $\mathbb{F}_p \times \mathbb{F}_p$ has a generator of the form $e_2$ or $e_1^{-1}e_2^k, k=0,\ldots,p-1$. Since the eigenvalues of the matrices of the representation are all distinct, there are exactly $p$ points that are fixed by any such element. So if we check that the element claimed by the theorem indeed is fixed by $e_1^{-1}e_2^k$, we are done (the claim for the group generated by $e_2$ is trivial).
By a calculation, we get
\begin{align*}
&e_1^{-1}e_2^k(1:\ldots:\omega^{k\frac{i(i-1)}{2}}:\omega^{k\frac{i(i+1)}{2}}:\ldots : \omega^{k\frac{p(p-1)}{2}})\\
=& e_1^{-1}(1:\ldots:\omega^{k\frac{i(i-1)}{2}+(i-1)k}:\omega^{k\frac{i(i+1)}{2}+ik}:\ldots : \omega^{k\frac{p(p-1)}{2}+k(p-1)})\\
=&(\omega^{k\frac{p(p-1)}{2}+k(p-1)}:\ldots:\underbrace{\omega^{k\frac{i(i-1)}{2}+(i-1)k}}_{i}:\underbrace{\omega^{k\frac{i(i+1)}{2}+ik}}_{i+1}:\ldots: \omega^{k\frac{(p-2)(p-1)}{2}+k(p-2)})
\end{align*}
Now, we have that $\omega^{k\frac{p(p-1)}{2}+k(p-1)} = \omega^{-k}$, so we may multiply each coordinate with $\omega^k$ and this gives us
\begin{align*}
=&(1:\ldots:\underbrace{\omega^{k\frac{i(i+1)}{2}}}_{i}:\underbrace{\omega^{k\frac{(i+2)(i+1)}{2}}}_{i+1}:\ldots: \omega^{k\frac{p(p-1)}{2}}).
\end{align*}
\end{proof}
Calculating what the action of $U$ on $(1:0:\ldots:0) \in \mathbb{P}^{\frac{p-1}{2}}$ in the moduli space exactly is, we see that $U$ switches the point $(1:0:\ldots:0)$ with $(1:2:\ldots:2)$. Next, the action of $VU$ fixes the point  $(1:0:\ldots:0)$, but doesn't fix the point $(1:2:\ldots:2)$. Since the order of $VU$ is prime, this means that the $VU$-orbit of $(1:2:\ldots:2)$ consists of $p$ elements. Combining these 2 observations, we have
\begin{lemma}
The $PSL_2(p)$-orbit of $(1:0:\ldots:0)$ consists of at least $p+1$ elements.
\label{lem:p1elem}
\end{lemma}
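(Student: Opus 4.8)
The plan is to leverage the two explicit computations recorded just above the statement: that $U$ interchanges $(1:0:\ldots:0)$ with $(1:2:\ldots:2)$, and that $VU$ fixes $(1:0:\ldots:0)$ but moves $(1:2:\ldots:2)$ nontrivially. From these, producing $p+1$ distinct points in the orbit is a short orbit-counting argument, so the genuine content has already been front-loaded into those M\"obius-type computations on $\mathbb{P}^{\frac{p-1}{2}}$.

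First I would pin down the size of the $\langle VU\rangle$-orbit of $Q := (1:2:\ldots:2)$. The element $VU$ is a nontrivial $p$-element of $PSL_2(p)$, so $\langle VU\rangle$ is cyclic of order $p$; since $p$ is prime, by orbit--stabilizer every $\langle VU\rangle$-orbit has cardinality $1$ or $p$. Because $VU\cdot Q\neq Q$ by hypothesis, the orbit $\{Q,\ VU\cdot Q,\ \ldots,\ (VU)^{p-1}\cdot Q\}$ has exactly $p$ elements. All of these lie in the $PSL_2(p)$-orbit of $P:=(1:0:\ldots:0)$, because $Q=U\cdot P$ and hence $(VU)^i\cdot Q=(VU)^iU\cdot P$ for all $i$.

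Next I would check that $P$ itself is not among those $p$ points, which then yields $p+1$ distinct points in the orbit of $P$. Indeed, if $P$ were in the $\langle VU\rangle$-orbit of $Q$, then, as $\langle VU\rangle$ acts transitively on that orbit, the $\langle VU\rangle$-orbit of $P$ would coincide with that of $Q$ and hence also have $p$ elements; but $P$ is fixed by $VU$, so its $\langle VU\rangle$-orbit is a single point --- a contradiction since $p\geq 5$. Therefore the $PSL_2(p)$-orbit of $P$ contains the $p$ points $(VU)^iU\cdot P$ together with $P$, i.e. at least $p+1$ elements.

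The only way the argument could fail is if one of the input computations were wrong, so the real obstacle is the verification (carried out just before the lemma) that $VU$ does not fix $(1:2:\ldots:2)$ --- the non-degeneracy input --- together with the identification $Q=U\cdot P$; once these are in hand, the reasoning above is purely formal. Note that the matching upper bound, that the orbit has \emph{exactly} $p+1$ elements, is not claimed here and would be treated separately, presumably by exhibiting a $PSL_2(p)$-equivariant bijection with $\mathbb{P}^1_{\mathbb{F}_p}$.
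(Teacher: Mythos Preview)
Your proposal is correct and follows exactly the argument the paper sketches in the paragraph preceding the lemma: use that $U$ sends $P=(1:0:\ldots:0)$ to $Q=(1:2:\ldots:2)$, that $VU$ has prime order $p$ and moves $Q$, hence the $\langle VU\rangle$-orbit of $Q$ has $p$ elements, and that $P$, being fixed by $VU$, cannot lie in that orbit. The paper compresses this into the single phrase ``combining these 2 observations'', while you spell out explicitly (via orbit--stabilizer) why $P$ is disjoint from the $\langle VU\rangle$-orbit of $Q$; otherwise the arguments are identical.
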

\begin{proof}\emph{[Theorem \ref{th:quantum}]}
The quantum space $\mathfrak{C}(1:0:\ldots:0:0)$ has the following property: there are exactly $p$ points in $\mathbb{P}^{p-1}$ for which the associated quadratic form has rank 1 and these $p$ points form an orbit under the Heisenberg action. So every algebra determined by a point in our moduli space $\mathbb{P}^{\frac{p-1}{2}}$ and isomorphic to the quantum space must have the same property. In \ref{lem:fixpoint} we found that there are $p+1$ different orbits consisting of $p$ points. Let $\rho=\omega^2$. We may set $a_0 = 1$, because otherwise the algebra $\mathfrak{C}(a_0:\ldots:a_{\frac{p-1}{2}})$ is not a domain and therefore it can not be isomorphic to $\mathfrak{C}(1:0:\ldots:0:0)$. So if the algebra $\mathfrak{C}(a_1,\ldots ,a_{\frac{p-1}{2}})$ is isomorphic to $\mathfrak{C}(1:0:\ldots:0:0)=\mathfrak{C}(0,\ldots,0)$, we must have that the matrix
\begin{displaymath}
\begin{bmatrix}
2x_0^2 & a_1 x_{\frac{p+1}{2}}^2 & a_2 x_{1}^2 & \cdots & a_{\frac{p-1}{2}}x_{\frac{p+\frac{p-1}{2}}{2}}^2& \cdots & a_1 x_{\frac{p+(p-1)}{2}}^2 \\
a_1 x_{\frac{p+1}{2}}^2 & 2 x_1^2 & a_1 x_{\frac{p+3}{2}}^2 & \cdots & \cdots &\cdots& a_2 x_0^2 \\
\vdots & \vdots & \ddots
\end{bmatrix}
\end{displaymath}
has rank 1 when $(x_0^2,\ldots,x_{p-1}^2)$ is equal to one of the points found in \ref{lem:fixpoint} (with $\omega$ changed by $\rho$ since $z$ acts on the representation $\mathbb{C}x_0^2 + \ldots +\mathbb{C}x_{p-1}^2$ as $\rho I_p$). These conditions completely determine $a_1,\ldots, a_{\frac{p-1}{2}}$ and therefore we can maximally have $p+1$ algebras isomorphic to $\mathfrak{C}(1:0:\ldots:0:0)$. But lemma \ref{lem:p1elem} gives us that there are minimally $p+1$ points, so we have exactly $p+1$ points. Therefore, the $PSL_2(p)$-orbit of $(1:0:\ldots:0)\in \mathbb{P}^{\frac{p-1}{2}}$ gives all the algebras isomorphic to the quantum space and this orbit consists of $p+1$ points.
\end{proof}
\subsection{Generalizing the duality}
We have a refinement of the duality between the quantum space and $\frac{p-1}{2}$ nonregular algebras, the algebras $\mathfrak{C}(0:0:\ldots:\underbrace{1}_i:\ldots:0), i \neq 0$. This is established in the following way: the algebra $\mathfrak{C}(0:\ldots:\underbrace{1}_i:\ldots:0:0)$ can be recovered from the quantum plane by deleting the $H_p$-orbit of $a_0(x_0x_i + x_i x_0)= a_i x_{\frac{i}{2}}^2$ and instead adding the relations $x_0^2 = x_1^2 = \ldots = x_{p-1}^2=0$. The Koszul dual of this algebra is a commutative algebra with relations $x_0 x_i = x_1 x_{i+1} = \ldots =x_{p-1}x_{p-1+i}=0$. These relations determine $p$ linear spaces of dimension $\frac{p-1}{2}-1$ and for every $0<i\leq \frac{p-1}{2}$ the corresponding union of $p$-dimensional subspaces is different from the other ones. It also follows that the points $(0:0:\ldots:\underbrace{1}_i:\ldots:0)\in \mathbb{P}^{\frac{p-1}{2}}$ are fixed by the element
\begin{displaymath}
\begin{bmatrix}
1 & 0 \\ 1 & 1
\end{bmatrix} \in PSL_2(p).
\end{displaymath}
\par Another duality is given between the $p+1$ hyperplanes in $\mathbb{P}^{\frac{p-1}{2}}$ determined by the $PSL_2(p)$-orbit of the hyperplane $a_0=0$ and the $p+1$ algebras isomorphic to the quantum space. Generically, a point on such a hyperplane determines a unique $H_p$-orbit of $p$ points. These $p$ points are the vertices of the full graph on these $p$ points, which will parametrize the point variety of the corresponding algebra isomorphic to the quantum plane. So we have the following theorem
\begin{theorem}
There is a 1-to-1 correspondence between the algebras isomorphic to the quantum space and the $PSL_2(p)$-orbit of the hyperplane $a_0=0$. This correspondence is given by the following rule: take the unique $p$-Sylow group of $PSL_2(p)$ that fixes the point in the $PSL_2(p)$-orbit of $(1:0:\ldots:0)$. Then there are $\frac{p-1}{2}$ other points that are also fixed by the same subgroup and the hyperplane through these points is the corresponding hyperplane.
\end{theorem}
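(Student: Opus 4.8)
The plan is to reduce everything to a single explicit computation with the unipotent element $M=VU=\begin{bmatrix}1&0\\1&1\end{bmatrix}$ and then propagate by $PSL_2(p)$-equivariance. First I would record, from Theorem~\ref{th:quantum} and the discussion preceding Lemma~\ref{lem:p1elem}, that the $p+1$ quantum-space points form a single orbit $\mathcal Q=PSL_2(p)\cdot(1:0:\ldots:0)$ and that $M$ fixes $(1:0:\ldots:0)$. Since $|\mathcal Q|=p+1$, the stabiliser $B_Q$ of each $Q\in\mathcal Q$ has order $\frac{p(p-1)}{2}$; as the number of $p$-Sylow subgroups of $B_Q$ divides $\frac{p-1}{2}<p$ and is $\equiv 1\bmod p$, it equals $1$, so $B_Q$ normalises its unique $p$-Sylow $S_Q$ and hence $B_Q=N_{PSL_2(p)}(S_Q)$ is a Borel subgroup. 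Thus each $Q$ is fixed by a \emph{unique} $p$-Sylow $S_Q$, and $Q\mapsto S_Q$ is a $PSL_2(p)$-equivariant injection of $\mathcal Q$ into the $(p+1)$-element set of $p$-Sylow subgroups, hence a bijection; both are the $PSL_2(p)$-set $PSL_2(p)/B\cong\mathbb P^1_{\mathbb F_p}$.

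Next I would compute the action of $M$ on the whole moduli space $\mathbb P^{\frac{p-1}{2}}$. As in the cases $p=3,5$, $M$ corresponds to the automorphism $\psi$ of $H_p$ with $\psi(e_1)=e_1e_2$, $\psi(e_2)=e_2$; solving $\psi(e_2)y_i=\omega^iy_i$ and $\psi(e_1)y_i=y_{i-1}$ gives $y_i=\omega^{-i(i+1)/2}x_i$. Substituting $x_j=\omega^{j(j+1)/2}y_j$ into the defining relation $a_0(x_0x_i+x_ix_0)=a_i x_{\frac{i}{2}}^2$ and setting $j=\frac{i}{2}$ (the inverse of $2$ times $i$), the scalar picked up is $\omega^{(j^2+j)-(2j^2+j)}=\omega^{-j^2}$, so $M$ acts on $\mathbb P^{\frac{p-1}{2}}$ \emph{diagonally} by $a_i\mapsto\omega^{-4^{-1}i^2}a_i$ (exponents mod $p$; for $p=5$ this is consistent with the matrix $\mathrm{diag}(\omega^4,\omega,1)$ recorded earlier). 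Because $i\mapsto i^2$ is injective on $\{1,\dots,\frac{p-1}{2}\}$ and $0$ is not a nonzero square, the $\frac{p+1}{2}$ eigenvalues $1,\omega^{-4^{-1}},\ldots,\omega^{-4^{-1}(\frac{p-1}{2})^2}$ are pairwise distinct, so $M$ has \emph{exactly} $\frac{p+1}{2}$ fixed points on $\mathbb P^{\frac{p-1}{2}}$: the coordinate points $E_0,E_1,\dots,E_{\frac{p-1}{2}}$. Of these, $E_0=(1:0:\ldots:0)$ is the quantum point fixed by $S_0:=\langle M\rangle$, while $E_1,\dots,E_{\frac{p-1}{2}}$ lie on and span the hyperplane $a_0=0$. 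Transporting by $g$ with $S=gS_0g^{-1}$, an arbitrary $p$-Sylow $S$ has fixed-point set $\{gE_0,\dots,gE_{\frac{p-1}{2}}\}$, that is, the quantum point $gE_0$ fixed by $S$ together with $\frac{p-1}{2}$ points spanning the hyperplane $\Pi_S:=g\cdot\{a_0=0\}$ from the $PSL_2(p)$-orbit of $\{a_0=0\}$.

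Then I would assemble the correspondence: $S\mapsto\Pi_S$ is $PSL_2(p)$-equivariant with image exactly $PSL_2(p)\cdot\{a_0=0\}$, and composing with $Q\mapsto S_Q$ gives the map $Q\mapsto\Pi_{S_Q}$, described precisely as in the statement. It remains to prove that $S\mapsto\Pi_S$ is injective, equivalently that this orbit has $p+1$ elements. Suppose $S\ne S'$ with $\Pi_S=\Pi_{S'}$. Lifting the projective action of each cyclic group to a linear one (possible since $\mathbb Z/p$ has trivial Schur multiplier), both $S$ and $S'$ stabilise this hyperplane setwise, each fixing a spanning family of $\frac{p-1}{2}$ of its points. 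But two distinct $p$-Sylows generate $PSL_2(p)$ — a subgroup containing both has $n_p\equiv 1\bmod p$ and $n_p\le p+1$, hence $n_p=p+1$, hence contains all $p$-Sylows, whose join is normal and nontrivial, hence all of $PSL_2(p)$ by simplicity for $p\ge 5$ — so the whole group would stabilise $\{a_0=0\}$. I would exclude this exactly as for $p=5$: applying $U$ sends the linear form $a_0$ to a nontrivial combination of all the $a_i$ (the bottom row of the recorded $U$-matrix in the case $p=5$), so $\{a_0=0\}$ is not $PSL_2(p)$-stable. A more conceptual alternative is to observe that the ambient $\frac{p+1}{2}$-dimensional projective representation is the even half of the Weil representation of $SL_2(p)$ — the eigenvalue pattern $\omega^{-4^{-1}i^2}$ on a unipotent is exactly its signature — hence irreducible and carrying no invariant hyperplane. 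With injectivity established, the orbit of $\{a_0=0\}$ has $p+1$ elements, the correspondence is a bijection of $PSL_2(p)$-sets, and all three $(p+1)$-element $PSL_2(p)$-sets involved (quantum algebras, $p$-Sylow subgroups, and the orbit of $\{a_0=0\}$) are canonically $\mathbb P^1_{\mathbb F_p}$, which is the assertion.

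The step I expect to be the main obstacle is the diagonalisation of $M$ with the right exponents: everything downstream rests on the eigenvalue-exponents being $-4^{-1}i^2$, whose pairwise distinctness over $i=0,\dots,\frac{p-1}{2}$ is precisely what forces \emph{exactly} $\frac{p+1}{2}$ fixed points and no positive-dimensional fixed locus. The secondary difficulty is a clean, uniform exclusion of a $PSL_2(p)$-invariant hyperplane; this is immediate for any given $p$ from an explicit computation with $U$, but is conceptually cleanest if one identifies the ambient representation with the even Weil representation of $SL_2(p)$.
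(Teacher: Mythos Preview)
Your core computation is identical to the paper's: both compute the action of $M=\begin{bmatrix}1&0\\1&1\end{bmatrix}$ via the substitution $y_i=\omega^{-i(i+1)/2}x_i$, obtain the diagonal action $a_i\mapsto\omega^{-i^2/4}a_i$, and conclude distinctness of eigenvalues from $i^2\equiv j^2\Rightarrow i\equiv\pm j\bmod p$ on the range $0\le i\le\frac{p-1}{2}$. The paper in fact declares this to be ``the only thing we need to check'' and stops there. You go further and supply two steps the paper leaves tacit: the uniqueness of the $p$-Sylow fixing a given quantum point (your Borel/normaliser count), and the injectivity of $S\mapsto\Pi_S$ (your argument that two distinct $p$-Sylows generate $PSL_2(p)$, forcing an invariant hyperplane which you then exclude). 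These additions make the bijection honest rather than asserted; the Weil-representation remark is a nice conceptual upgrade over the $U$-row computation for ruling out an invariant hyperplane. In short, same key lemma, but your write-up is more complete than the paper's own proof.
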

\begin{proof}
The only thing we need to check is that the action of a $p$-Sylow subgroup does indeed have exactly $p$ fixed points in $\mathbb{P}^{\frac{p-1}{2}}$, which amounts to proving that the matrix associated to a generator of the chosen subgroup has exactly $\frac{p-1}{2}$ different eigenvalues. Since all $p$-Sylow subgroups are conjugated, it suffices to prove this for the subgroup generated by
\begin{displaymath}
M=\begin{bmatrix}
1 & 0 \\ 1 & 1
\end{bmatrix} \in PSL_2(p).
\end{displaymath}
Calculating the action of $e_1e_2$ on an algebra corresponding to the point $(a_1,\ldots,a_\frac{p-1}{2})$, we find that the new generators $y_i$ are equal to 
\begin{displaymath}
y_i = \omega^{-\frac{i(i+1)}{2}}x_i.
\end{displaymath}
If we want to calculate the action of this element on a point $(a_1,\ldots,a_\frac{p-1}{2})$, we find that 
\begin{align*}
y_0 y_i + y_i y_0 &= \omega^{-\frac{i(i+1)}{2}}(x_0 x_i + x_i x_0)\\
                  &= \omega^{-\frac{i(i+1)}{2}} a_i x_{\frac{i}{2}}^2\\
                  &= \omega^{-\frac{i(i+1)}{2}} a_i \omega^{\frac{i}{2}(\frac{i}{2}+1)} y_{\frac{i}{2}}^2 \\
                  &= \omega^{\frac{-i^2}{4}}a_i y_{\frac{i}{2}}^2.
\end{align*}
This makes it clear that the corresponding matrix is a diagonal matrix. Now, if $ \omega^{\frac{-i^2}{4}} =  \omega^{\frac{-j^2}{4}}$, then we must have $i^2 \equiv j^2 \bmod p$ or put differently, $i \equiv \pm j \bmod p$. Since we have $1\leq i,j \leq \frac{p-1}{2}$, this ensures that $i = j$. 
\end{proof}
\subsection{Character series}
In the context of finding noncommutative algebras with properties similar to the polynomial rings, one of the similarities we would like to study are the character series. In this subsection we prove the following theorem.
\begin{theorem}
If the algebra $\mathfrak{C}(a_0:\ldots:a_{\frac{p-1}{2}})$ satisfies the conditions of theorem \ref{th:Reg}, it has the same character series as the polynomial ring in $p$ variables.
\end{theorem}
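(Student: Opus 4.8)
The plan is to compute the character series $Ch_{\mathfrak{C}}(g,t)$ for every $g\in H_p$ by passing through the Koszul dual, and to check the answer is $\det(1-t\,g|_{\mathfrak{C}_1})^{-1}$, which is the character series of $\mathbb{C}[x_0,\ldots,x_{p-1}]$ with the $H_p$-action on its degree one part (this degree one part is $V_1$, exactly as $\mathfrak{C}_1$ is). Here, for a finite-dimensional $H_p$-module $M$ and $g\in H_p$, $\det(1-t\,g|_M)$ denotes the reverse characteristic polynomial of the operator by which $g$ acts on $M$, so that $Ch_{\mathrm{Sym}(M)}(g,t)=\det(1-t\,g|_M)^{-1}$. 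First I would record that $\mathfrak{C}$ is Koszul: by Theorem~\ref{th:Reg} the associated quadric system is base-point free, so $\mathfrak{C}^!=\mathrm{Sym}(\mathfrak{C}_1^{\ast})/(q_0,\ldots,q_{p-1})$ is finite-dimensional, i.e. the $p$ quadrics $q_i$ form a regular sequence in the polynomial ring $\mathrm{Sym}(\mathfrak{C}_1^{\ast})$ and $\mathfrak{C}^!$ is a complete intersection of quadrics, hence Koszul (as in the $n=5$ case, or \cite{shelton2001koszul,conca2013koszul}); since Koszulity is invariant under quadratic duality, $\mathfrak{C}=(\mathfrak{C}^!)^!$ is Koszul, and identity~(\ref{al:chKos}) gives $Ch_{\mathfrak{C}}(g,t)=Ch_{(\mathfrak{C}^!)^{\ast}}(g,-t)^{-1}$. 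It remains to compute the right-hand side.

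The second step is to compute $Ch_{\mathfrak{C}^!}$ from the Koszul complex of the regular sequence $q_0,\ldots,q_{p-1}$. These quadrics span an $H_p$-submodule $W\subseteq\mathrm{Sym}^2(\mathfrak{C}_1^{\ast})$, and since they are a regular sequence the complex $0\to S\otimes\bigwedge^{p}W\to\cdots\to S\otimes\bigwedge^{1}W\to S\to\mathfrak{C}^!\to 0$, with $S=\mathrm{Sym}(\mathfrak{C}_1^{\ast})$ and $\bigwedge^{k}W$ placed in internal degree $2k$, is an $H_p$-equivariant graded free resolution. Taking the alternating sum of the character series of its terms gives \[ Ch_{\mathfrak{C}^!}(g,t)=\frac{\det(1-t^{2}\,g|_{W})}{\det(1-t\,g|_{\mathfrak{C}_1^{\ast}})}. \] Dualising (replace each module by its dual, equivalently $g$ by $g^{-1}$) and substituting $-t$ yields \[ Ch_{(\mathfrak{C}^!)^{\ast}}(g,-t)=\frac{\det(1-t^{2}\,g|_{W^{\ast}})}{\det(1+t\,g|_{\mathfrak{C}_1})}. \]

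The heart of the proof is the identification of $W$ as an $H_p$-module and the resulting determinant identity. In the basis $\{q_0,\ldots,q_{p-1}\}$ the generator $e_1$ permutes the $q_i$ cyclically, each $q_i$ is an eigenvector of $e_2$ (every monomial $z_jz_k$ occurring in $q_i$ has $j+k$ a fixed residue mod $p$, together with the central square), and $z$ acts by the scalar $\omega^{-2}$; a trace computation then gives $\chi_{W}(z^{k})=p\omega^{-2k}$ and $\chi_{W}(e_1^{a}e_2^{b})=0$ for $(a,b)\neq(0,0)$, whence $\chi_{W^{\ast}}(z^{k})=p\omega^{2k}$ and $\chi_{W^{\ast}}(e_1^{a}e_2^{b})=0$ otherwise. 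On the other hand, using $(e_1^{a}e_2^{b})^{2}=z^{-ab}e_1^{2a}e_2^{2b}$ and the character formula for the $p$-dimensional irreducibles of $H_p$ recalled in the preliminaries, one checks $\chi_{\mathfrak{C}_1}(g^{2})=\chi_{W^{\ast}}(g)$ for all $g\in H_p$; applying this to every power of $g$ and using that $g$ has finite order forces $g|_{W^{\ast}}$ and $g|_{\mathfrak{C}_1}^{2}$ to have the same eigenvalues with multiplicity, so $\det(1-t^{2}\,g|_{W^{\ast}})=\det(1-t^{2}\,g|_{\mathfrak{C}_1}^{2})=\det(1-t\,g|_{\mathfrak{C}_1})\det(1+t\,g|_{\mathfrak{C}_1})$. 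Substituting back collapses everything: $Ch_{(\mathfrak{C}^!)^{\ast}}(g,-t)=\det(1-t\,g|_{\mathfrak{C}_1})$, hence $Ch_{\mathfrak{C}}(g,t)=\det(1-t\,g|_{\mathfrak{C}_1})^{-1}$, which is exactly the character series of $\mathbb{C}[x_0,\ldots,x_{p-1}]$ because $\mathfrak{C}_1\cong\mathbb{C}x_0+\cdots+\mathbb{C}x_{p-1}$ as $H_p$-representations.

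The main obstacle is this last step: one must pin down the full $H_p$-module structure of the space of defining quadrics of $\mathfrak{C}^!$ (not merely its dimension) and verify the character coincidence $\chi_{W^{\ast}}(g)=\chi_{\mathfrak{C}_1}(g^{2})$; once that is in hand the remainder is formal bookkeeping with Euler characteristics of equivariant resolutions. A secondary point requiring care is that the Koszul complex used in the second step is exact precisely because of the base-point-free hypothesis of Theorem~\ref{th:Reg}, and is $H_p$-equivariant precisely because $W$ is an $H_p$-submodule of $\mathrm{Sym}^2(\mathfrak{C}_1^{\ast})$; both facts are used essentially.
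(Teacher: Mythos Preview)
Your argument is correct, but it follows a genuinely different route from the paper. The paper never invokes Koszulity of $\mathfrak{C}$ itself nor the equivariant Koszul resolution of the complete intersection $\mathfrak{C}^!$. Instead it exploits the Clifford-algebra structure directly: a regular graded Clifford algebra is a free module of rank $2^p$ over the central polynomial subring $R=\mathbb{C}[x_0^2,\ldots,x_{p-1}^2]$, with basis the ordered monomials $x_{i_1}\cdots x_{i_k}$; as an $H_p$-representation this basis is identified with $\wedge(V)$, so $Ch_{\mathfrak{C}}(g,t)=Ch_{R}(g,t)\cdot Ch_{\wedge(V)}(g,t)$. The paper computes $Ch_{\wedge(V)}$ by hand (the tensor decomposition of $V^{\otimes k}$ forces $\wedge^k V\cong V_k^{\oplus\binom{p}{k}/p}$ for $0<k<p$), reads off $Ch_{S(V)}$ from the Koszul relation between $S(V)$ and $\wedge(V)$, and then checks that the product above simplifies to $Ch_{S(V)}$. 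Your approach, by contrast, reduces everything to the single representation-theoretic identity $\chi_{W^*}(g)=\chi_{V}(g^2)$ (equivalently $W^*\cong V_2$), which you verify cleanly using the explicit $H_p$-action on the $q_k$. What the paper's method buys is that it avoids checking Koszulity of $\mathfrak{C}$ and works entirely with the Clifford free-module decomposition, a fact already built into Theorem~\ref{th:Reg}; what your method buys is greater portability---it would apply verbatim to any Koszul algebra whose dual is an equivariant complete intersection of quadrics, not just Clifford algebras---and it isolates precisely which piece of $H_p$-structure (namely $W\cong V_{p-2}$) is responsible for the coincidence with the polynomial ring.
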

First, we need some representation theory of $H_p$. More importantly, we need to find  the decompositions of tensor products of simple $p$-dimensional representations. Let $(V,\varphi)$ be the simple representation corresponding to $\varphi(z) = \omega I$ and let $(V_i,\varphi_i)$ be the simple representation defined by $\varphi_i(z) = \omega^i I, i = 1,\ldots p-1$. Let $W_{i,j},\chi_{i,j}$ be the $1$-dimensional representations of $H_p$ such that $\chi_{i,j}(e_1) = \omega^i, \chi_{i,j}(e_2) = \omega^j$ (in particular, $T = \chi_{0,0}$) and define $W = \oplus_{i,j=0}^{p-1} W_{i,j}$. Then character decomposition shows that
\begin{align*}
V \otimes W_{i,j} &= V, \\
V \otimes V_i     &= V_{i+1}^{\oplus p}, i \neq p-1, \\
V \otimes V_{p-1} &= W.
\end{align*}
For example, the tensor algebra $T(V)$ has as character series
\begin{align*}
Ch_{T(V)}(z^k,t) &= \frac{1}{1-\omega^k pt},\\
Ch_{T(V)}(e_1^ke_2^l,t)&=1, (k,l)\neq (0,0).
\end{align*}
To find the character series of our graded Clifford algebras, we will use the fact that such an algebra is a free module of rank $2^p$ over a polynomial ring in $p$ variables, with a basis found by the ordered monomials 
\begin{displaymath}
\{x_{i_1}\ldots x_{i_k}| 0\leq i_1<i_2<\ldots<i_k\leq p-1, 0\leq k \leq p-1\}.
\end{displaymath} So in order to find the character series of the regular Clifford algebras, we first need to find the character series of $S(V)=\mathbb{C}[V]$. Since this is a Koszul algebra, it is more convenient to calculate the character series of $\wedge(V) = (S(V)^!)^*$, for this is a finite dimensional algebra and then use equation \ref{al:chKos}. In the tensor algebra $T(V)$, every degree not divisible by $p$ decomposes as one simple representation with a certain multiplicity. From this we immediately deduce
\begin{align*}
\wedge^0(V) &= T,\\
\wedge^i(V) &= V_i^{\oplus \frac{\binom{p}{i}}{p}},0\neq i \neq p.
\end{align*}
In degree $p$, it is easily checked that $x_0 \wedge \ldots \wedge x_{p-1}$ is fixed by $H_p$ and thus $\wedge^p(V) = T$. Therefore, we have
\begin{align*}
Ch_{\wedge(V)}(z^k,t)&=(1+\omega^k t)^p\\
Ch_{\wedge(V)}(e_1^ke_2^l,t)&=1+t^p, (k,l)\neq (0,0).
\end{align*}
We can now use equation \ref{al:chKos} to find the character series of $S(V)$.
\begin{theorem}
The character series of $S(V)$ is given by 
\begin{align*}
Ch_{S(V)}(z^k,t)&=\frac{1}{(1-\omega^k t)^p}\\
Ch_{S(V)}(e_1^ke_2^l,t)&=\frac{1}{1-t^p}, (k,l)\neq (0,0).
\end{align*}
\end{theorem}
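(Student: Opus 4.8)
The plan is to read this off directly from the Koszul duality formula for character series, equation \ref{al:chKos}, applied to the Koszul algebra $A = S(V)$. Since $S(V)$ is Koszul with $(S(V)^!)^* = \wedge(V)$, equation \ref{al:chKos} gives
\begin{displaymath}
Ch_{S(V)}(g,t)\, Ch_{\wedge(V)}(g,-t) = 1
\end{displaymath}
for every $g \in H_p$, so that $Ch_{S(V)}(g,t) = Ch_{\wedge(V)}(g,-t)^{-1}$. Since the character series of $\wedge(V)$ has just been computed from the identifications $\wedge^0(V) = \wedge^p(V) = T$ and $\wedge^i(V) = V_i^{\oplus \binom{p}{i}/p}$ for $0 \neq i \neq p$, the proof consists only of substituting $t \mapsto -t$ and inverting, conjugacy class by conjugacy class.

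First I would treat the central elements $g = z^k$. From $Ch_{\wedge(V)}(z^k,t) = (1+\omega^k t)^p$ one gets $Ch_{\wedge(V)}(z^k,-t) = (1-\omega^k t)^p$, hence $Ch_{S(V)}(z^k,t) = (1-\omega^k t)^{-p}$, as claimed; for $k = 0$ this correctly recovers the Hilbert series $(1-t)^{-p}$ of $\mathbb{C}[x_0,\ldots,x_{p-1}]$. Next I would treat $g = e_1^k e_2^l$ with $(k,l) \neq (0,0)$, where $Ch_{\wedge(V)}(g,t) = 1 + t^p$. The one point that needs care is the sign under $t \mapsto -t$: since $p$ is an odd prime, $(-t)^p = -t^p$, so $Ch_{\wedge(V)}(g,-t) = 1 - t^p$ and therefore $Ch_{S(V)}(g,t) = (1-t^p)^{-1}$.

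There is no genuine obstacle here: the argument is entirely formal once $Ch_{\wedge(V)}$ is in hand, and the only things to watch are the odd-$p$ sign in the second case and the correctness of the exterior-algebra decomposition feeding into $Ch_{\wedge(V)}$, both of which were already settled above. (One could instead argue directly from the decompositions of the $S^i(V)$, but passing through the finite-dimensional algebra $\wedge(V)$ and equation \ref{al:chKos} is cleaner, which is why the preliminaries were set up this way.)
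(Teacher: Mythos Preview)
Your proposal is correct and is exactly the approach the paper takes: it computes $Ch_{\wedge(V)}$ first and then simply invokes equation~\ref{al:chKos} with $A=S(V)$, $(A^!)^*=\wedge(V)$, substituting $t\mapsto -t$ and inverting. The paper leaves the substitution implicit, so your explicit check of the odd-$p$ sign in the $e_1^k e_2^l$ case is a small improvement in clarity rather than a different argument.
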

If we want to calculate the character series of the regular Clifford algebra, we use the fact that this algebra is a free module of rank $2^p$ over the polynomial ring $\mathbb{C}[x_0^2,\ldots,x_{p-1}^2]$. The character series of $\mathbb{C}[x_0^2,\ldots,x_{p-1}^2]$ is given by
\begin{align*}
Ch_{\mathbb{C}[x_0^2,\ldots,x_{p-1}^2]}(z^k,t)&=\frac{1}{(1-\omega^{2k} t^2)^p}\\
Ch_{\mathbb{C}[x_0^2,\ldots,x_{p-1}^2]}(e_1^ke_2^l,t)&=\frac{1}{1-t^{2p}}, (k,l)\neq (0,0).
\end{align*}
As an $H_p$-representation, the vector space with basis \begin{displaymath}
\{x_{i_1}\ldots x_{i_k}| 0\leq i_1<i_2<\ldots<i_k\leq p-1, 0\leq k \leq p\}
\end{displaymath} is the same as $\wedge(V)$. Therefore, the character series of the graded Clifford algebra is given by
\begin{align*}
Ch_{\mathfrak{C}(a_0:\ldots:a_{\frac{p-1}{2}})}(z^k,t)&=Ch_{\mathbb{C}[x_0^2,\ldots,x_{p-1}^2]}(z^k,t)Ch_{\wedge(V)}(z^k,t)\\&=\frac{(1+\omega^k t)^p}{(1-\omega^{2k} t^2)^p} = \frac{1}{(1-\omega^k t)^p},\\
Ch_{\mathfrak{C}(a_0:\ldots:a_{\frac{p-1}{2}})}(e_1^ke_2^l,t)&=Ch_{\mathbb{C}[x_0^2,\ldots,x_{p-1}^2]}(e_1^ke_2^l,t)Ch_{\wedge(V)}(e_1^ke_2^l,t)\\&=\frac{1+t^p}{1-t^{2p}}=\frac{1}{1-t^p}, (k,l)\neq (0,0).
\end{align*}
This is indeed the character series of $S(V)$ as expected.
\subsection{The character series of the center}
We again assume that the requirements of theorem \ref{th:Reg} are fulfilled. Since $p\geq 5$ is prime, it is not divisible by 2. This means that the center of the Clifford algebra $\mathfrak{C}(a_0:\ldots:a_{\frac{p-1}{2}})=\mathfrak{C}(a_1,\ldots,a_{\frac{p-1}{2}})$ is a quadratic extension of the polynomial ring $\mathbb{C}[x_0^2,\ldots,x_{p-1}^2]$. The extra element is given by an element of degree $p$, $0 \neq g \in Z(\mathfrak{C}(a_1,\ldots,a_{\frac{p-1}{2}}))_p$ and satisfies the equation $g^2=det(M)$, with $M$ the quadratic form associated to $\mathfrak{C}(a_1,\ldots,a_{\frac{p-1}{2}})$. Since $H_p$ works as algebra automorphisms, it follows that the center is fixed by $H_p$ and so $\mathbb{C}g$ is a 1-dimensional $H_p$-representation. We will now prove
\begin{proposition}
As an $H_p$-representation, $\mathbb{C}g\cong T$.
\end{proposition}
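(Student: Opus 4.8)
The plan is to use the identity $g^2=\det M$ together with the fact that $\mathbb{C}g$ must be one of the one-dimensional representations $W_{i,j}$ of $H_p$. Since $p$ is odd, the subring $\mathbb{C}[x_0^2,\ldots,x_{p-1}^2]$ is concentrated in even degrees, so the degree-$p$ part of the center equals $\mathbb{C}g$; as $H_p$ acts on $\mathfrak{C}(a_1,\ldots,a_{\frac{p-1}{2}})$ by grading-preserving algebra automorphisms it preserves the center and its grading, and hence $\mathbb{C}g$ is a genuine one-dimensional $H_p$-subrepresentation, say $\mathbb{C}g\cong W_{i,j}$. Squaring inside the center, $\mathbb{C}g\otimes\mathbb{C}g\cong W_{2i,2j}$, while on the other hand $g^2=\det M$ spans a one-dimensional subrepresentation of $\mathbb{C}[x_0^2,\ldots,x_{p-1}^2]$ (nonzero, since $\mathfrak{C}$ is a domain and $g\neq0$). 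Thus it suffices to prove that $\det M$ is $H_p$-invariant: then $W_{2i,2j}\cong T$ forces $2i\equiv 2j\equiv0\pmod p$, and as $p$ is odd this gives $i\equiv j\equiv0$, so $\mathbb{C}g\cong T$.

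To see that $\det M$ is invariant it is enough to check invariance under $e_1$ and $e_2$ (invariance under $z$ is automatic). Write $M_{ij}=x_ix_j+x_jx_i$, an element of the center. Since $e_1\cdot x_k=x_{k-1}$ and $e_1$ is an algebra automorphism, $e_1\cdot M_{ij}=M_{i-1,j-1}$ with indices taken mod $p$; so the matrix with entries $e_1\cdot M_{ij}$ is obtained from $M$ by simultaneously cycling its rows and columns, i.e.\ by a conjugation with a permutation matrix, and therefore $e_1\cdot\det M=\det M$. Since $e_2\cdot x_k=\omega^k x_k$, we get $e_2\cdot M_{ij}=\omega^{i+j}M_{ij}$, so in the Leibniz expansion of $\det M$ each monomial $\prod_{i=0}^{p-1}M_{i,\sigma(i)}$ is multiplied by $\omega^{\sum_i(i+\sigma(i))}=\omega^{2\sum_i i}=\omega^{p(p-1)}=1$; summing over $\sigma$ yields $e_2\cdot\det M=\det M$.

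These two computations complete the argument. I expect everything to be routine; the only places needing a little care are the bookkeeping in the $e_2$-step (that $\sum_i\sigma(i)=\sum_i i$ for every permutation and that $p\mid p(p-1)$) and, in the $e_1$-step, checking that shifting the indices of the entries of $M$ really is the same operation as permuting rows and columns and hence leaves the determinant unchanged. An alternative route, which I would not pursue because it is longer, is to compare character series: since the center $Z$ is free of rank two over $\mathbb{C}[x_0^2,\ldots,x_{p-1}^2]$ on $\{1,g\}$, one has $Ch_Z(h,t)=Ch_{\mathbb{C}[x_0^2,\ldots,x_{p-1}^2]}(h,t)\,(1+\chi_{\mathbb{C}g}(h)\,t^p)$, so an independent computation of $Ch_Z$ would determine $\chi_{\mathbb{C}g}$.
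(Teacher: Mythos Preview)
Your argument is correct and is more direct than the paper's. Both proofs make the same first reduction: since $\mathbb{C}g\cong W_{i,j}$ for some $(i,j)$ and $g^2=\det M$, it suffices to show $\mathbb{C}\det M\cong T$, the oddness of $p$ then forcing $i=j=0$. At that point, however, the approaches diverge. The paper argues by deformation: expanding $\det M$ in a fixed $H_p$-graded basis of $\mathfrak{C}(a_1,\ldots,a_{\frac{p-1}{2}})_{2p}$, the coefficient of the $H_p$-invariant monomial $x_0^2\cdots x_{p-1}^2$ is a polynomial in the parameters with constant term $2^p$ (coming from the quantum-space point), hence is nonzero on a Zariski-open set; on that open set $\det M$ therefore lies in the trivial isotypic component, and since the coefficients of all other isotypic components are also polynomials they must vanish identically. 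You instead verify invariance by a direct computation, observing that $e_1$ acts on the entries $M_{ij}$ by a simultaneous cyclic shift of rows and columns (a conjugation), while $e_2$ scales the $(i,j)$ entry by $\omega^{i+j}$, so that every term in the Leibniz expansion is multiplied by $\omega^{p(p-1)}=1$; equivalently, $e_2\cdot M=DMD$ with $D=\mathrm{diag}(1,\omega,\ldots,\omega^{p-1})$ and $(\det D)^2=1$. Your route is shorter and entirely elementary, and it makes the invariance hold uniformly for all parameter values at once without appealing to a Zariski-closure argument; the paper's approach, by contrast, illustrates a technique that can be useful when a direct check is less accessible.
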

\begin{proof}
The proposition is equivalent with the statement that $\mathbb{C}det(M)\cong T$, so we will prove this fact. Since $g^2=det(M)$, $\mathbb{C}det(M)$ is indeed a $H_p$-representation. Fix a basis for $\mathfrak{C}(a_1,\ldots,a_{\frac{p-1}{2}})_{2p}$ determined by it being a free module over $\mathbb{C}[x_0^2,\ldots,x_{p-1}^2]$ of rank $2^p$ with the ordered monomials as basis and decompose this vector space as simple $H_p$-representations. Calculating the determinant of $M$, we get an element of the center of the form
\begin{displaymath}
det(M)=(2^p+f(a_1,\ldots,a_{\frac{p-1}{2}}))x_0^2x_1^2\ldots x_{p-1}^2 + \ldots
\end{displaymath}
with $2^p+f(a_1,\ldots,a_{\frac{p-1}{2}})\neq 0$ a polynomial (not $0$ because for the quantum plane we have $f(0,\ldots,0) = 0$). For the other elements of our basis, we get coefficients which are polynomials in $a_1,\ldots,a_{\frac{p-1}{2}}$. Because  $2^p+f(a_1,\ldots,a_{\frac{p-1}{2}})\neq 0$, we have a Zariski-open subset of $\mathbb{A}^{\frac{p-1}{2}}$ for which $\mathbb{C}det(M)$ is indeed the trivial representation. Since $\mathbb{C}det(M)$ is always a $H_p$-representation, this means that on the same Zariski-open subset the coefficients of all the nontrivial representations in $\mathfrak{C}(a_1,\ldots,a_{\frac{p-1}{2}})_{2p}$ are 0. Since these coefficients are also polynomial functions on $\mathbb{A}^{\frac{p-1}{2}}$, they must be $0$ on $\mathbb{A}^{\frac{p-1}{2}}$. This means that $\mathbb{C}det(M) \cong T$.
\end{proof}
From this, one easily finds that the character series of $Z(\mathfrak{C}(a_1,\ldots,a_{\frac{p-1}{2}}))$ is given by
\begin{align*}
Ch_{Z(\mathfrak{C}(a_1,\ldots,a_{\frac{p-1}{2}}))}(z^k,t)& = \frac{1+t^p}{(1-\omega^{2k} t^2)^p},\\
Ch_{Z(\mathfrak{C}(a_1,\ldots,a_{\frac{p-1}{2}}))}(e_1^ke_2^l,t)&=\frac{1+t^p}{1-t^{2p}}\\ &= \frac{1}{1-t^p}, (k,l)\neq (0,0).
\end{align*}
\section{Future work}
It is our hope to extend the dualities between the points necessary to make a compatification of $X(5)$ and the points isomorphic to the quantum space to the entire curve $\overline{X(5)}$. Hopefully there will be a subset of $\mathbb{P}^2$ that will determine $5$-dimensional Sklyanin algebras depending on an elliptic curve and a point of order 2. The quadratic form associated to this algebra will be crucial in understanding the representation theory and the underlying $H_5$-action, which will work on the variety that parametrizes representations of dimensions $1,2$ or $4$. Further generalizations to dimension $p$, $p$ being a prime number, will also be worked on in the future.
\bibliographystyle{abbrvnat}
\bibliography{Cliffordalgebrabib}
\nocite{*}
\end{document}